\numberwithin{equation}{section}
\def\3bar{{|\hspace{-.02in}|\hspace{-.02in}|}}
\def\E{{\mathcal{E}}}
\def\T{{\mathcal{T}}}
\def\pT{{\partial T}}
\def\W{{\mathcal{W}}}
\def\ljump{{[\![}}
\def\rjump{{]\!]}}
\def\bn{{\bm{n}}}
\newtheorem{algorithm}{Algorithm}[section]
 \title {A Parallel iterative Algorithm  for  primal-dual  weak Galerkin Schemes}
\author{
Chunmei Wang \thanks{Department of Mathematics, University of Florida, Gainesville, FL 32611, USA (chunmei.wang@ufl.edu). The research of Chunmei Wang was partially supported by National Science Foundation Award DMS-2136380.}
 \and
 Junping Wang\thanks{Division of Mathematical
 Sciences, National Science Foundation, Alexandria, VA 22314
 (jwang@nsf.gov). The research of Junping Wang was supported in part by the
 NSF IR/D program, while working at National Science Foundation.
 However, any opinion, finding, and conclusions or recommendations
 expressed in this material are those of the author and do not
 necessarily reflect the views of the National Science Foundation.}
 %  \and  
 % Shangyou Zhang\thanks{Department of Mathematical Sciences,  University of Delaware, Newark, DE 19716, USA (szhang@udel.edu).  }
 }
\begin{document}

\maketitle

\begin{abstract}
This paper presents and analyzes a parallelizable iterative procedure based on domain decomposition for primal-dual weak Galerkin (PDWG) finite element methods applied to the Poisson equation. The existence and uniqueness of the PDWG solution are established. Optimal order of error estimates are derived in both a discrete norm and the $L^2$ norm. The convergence analysis is conducted for domain decompositions into individual elements associated with the PDWG methods, which can be extended to larger subdomains without any difficulty.
\end{abstract}

\begin{keywords}
primal-dual, weak Galerkin, finite element methods, Poisson equation, parallelizable iterative methods, domain decomposition.
\end{keywords}

\begin{AMS}
Primary, 65N30, 65N15, 65N12, 74N20; Secondary, 35B45, 35J50,
35J35
\end{AMS}

\pagestyle{myheadings}
\section{Introduction}
In this paper we consider numerical methods for the Poisson equation with boundary conditions. The model problem seeks an unknown function $u$ satisfying
\begin{equation}\label{model}
\begin{split}
\Delta u=&f, \qquad
\text{in}\quad \Omega,\\ u=&g, \qquad \text{on}\quad \Gamma,\\ 
%a\nabla u  \cdot \bn=&g_2, \qquad \text{on}\quad \Gamma_N,
\end{split}
\end{equation}
where $\Omega\subset \mathbb R^d (d=2, 3)$ is an open bounded and connected  domain with Lipschitz continuous boundary  $\partial \Omega$, denoted by  $\Gamma$. 

The Weak Galerkin (WG) finite element method is a recently developed numerical technique for solving partial differential equations (PDEs). In this method, differential operators in the variational formulation are reconstructed or approximated within a framework that emulates the theory of distributions for piecewise polynomials. The usual regularity requirements for the approximating functions are offset by carefully designed stabilizers. The WG method has been studied for numerous model PDEs, as evidenced by an incomplete list of references \cite{wg1, wg2, wg3, wg4, wg5, wg6, wg7, wg8, wg9, wg10, wg11, wg12, wg13, wg14, wg15, wg16,
wg17, wg18, wg19, wg20, wg21, itera, wy3655}. These studies demonstrate the WG method's potential as a practically useful tool in scientific computing.
What sets WG methods apart from other finite element methods is the use of weak derivatives and weak continuities in designing numerical schemes based on the conventional weak forms of the underlying PDE problems. This inherent structural flexibility makes WG methods particularly suitable for a wide range of PDEs, ensuring stability and accuracy in their approximations.

A notable advancement in the WG method is the development of the "Primal-Dual Weak Galerkin (PDWG)" approach. This method addresses problems that are challenging for traditional numerical techniques \cite{pdwg1, pdwg2, pdwg3, pdwg4, pdwg5, pdwg6, pdwg7, pdwg8, pdwg9, pdwg10, pdwg11, pdwg12, pdwg13, pdwg14, pdwg15}. The core concept of PDWG is to view the numerical solutions as a constrained minimization of functionals, with constraints mimicking the weak formulation of the PDEs using weak derivatives. This approach yields an Euler-Lagrange equation that provides a symmetric scheme incorporating both the primal variable and the dual variable (Lagrange multiplier).

Given the large size of the computational problem, it is essential to design efficient and parallelizable iterative algorithms for the PDWG scheme. Iterative algorithms have been developed for WG methods using domain decomposition techniques \cite{tw, itera, tw2017, qzw, lxz}. Our approach is inspired by Despres' work on a Helmholtz problem \cite{9} and a Helmholtz-like problem related to Maxwell's equations \cite{10, 11}. It is important to note that the convergence in \cite{9, 10, 11} was established for the differential problems in their strong form, with numerical results supporting the iterative procedures for the discrete case.
Additionally, Douglas et al. \cite{j} introduced a parallel iterative procedure for second-order partial differential equations approximated by mixed finite element methods. Recently, Wang et al. \cite{itera} proposed a similar parallel iterative procedure for the weak Galerkin method for second-order elliptic problems. The goal of this paper is to extend the results of Douglas et al. and Wang et al. \cite{itera} to the PDWG finite element methods. Specifically, the iterative procedure developed in this paper for the PDWG method can be naturally and easily implemented on a massively parallel computer by assigning each subdomain to its own processor.

The paper is structured as follows: Section 2 provides a brief review of the weak formulation and discrete weak differential operators. Section 3 details the primal-dual WG method for solving the Poisson equation \eqref{model} based on the weak form \eqref{weakform}. Section 4 establishes the existence and uniqueness of the primal-dual WG scheme proposed in Section 3. In Section 5, we derive the error equations for the primal-dual WG scheme. Section 6 presents the optimal order error estimates in discrete norms, while Section 7 focuses on establishing  an optimal order error estimate in the $L^2$ norm. Section 8 introduces domain decompositions for the primal-dual WG method, followed by a description of a parallel iterative procedure for the PDWG finite element method in Section 9. Finally, Section 10 provides a convergence analysis for the parallel iterative scheme.

Throughout the paper, we use the standard notations for Sobolev spaces  
and norms. For any open bounded domain $D\subset \mathbb{R}^d$ with
Lipschitz continuous boundary, denote by $\|\cdot\|_{s, D}$, $|\cdot|_{s, D}$ and $(\cdot,\cdot)_{s, D}$ the norm, seminorm and the inner product in the Sobolev space $H^s(D)$ for $s\ge 0$, respectively. 
%The norms in $H^{s}(D)$ ($s<0$) are defined by duality with the norms in $H^{|s|}(D)$. 
The space $H^0(D)$ coincides with $L^2(D)$, where the norm and the inner product are denoted by $\|\cdot \|_{D}$ and $(\cdot,\cdot)_{D}$, respectively. When $D=\Omega$, or when the domain of integration is clear from the context,  the subscript $D$ is dropped in the norm and the inner product notation.

\section{Weak Formulations and Discrete Weak Differential Operators} This section will introduce the weak formulation of the Poisson equation  \eqref{model} and briefly review the discrete weak differential operator. 

The weak formulation of the Poisson equation 
\eqref{model} seeks $u\in L^2(\Omega)$ satisfying
\begin{equation}\label{weakform}
 (u,  \Delta \sigma) = (f, \sigma)+\langle g,  \nabla \sigma \cdot \bn\rangle_{\Gamma}, \quad \forall \sigma\in H^2(T).
\end{equation}
%where $W=\{\sigma\in H^1(\Omega),  \nabla \sigma \in H(\operatorname{div}; \Omega),  \sigma|_{\Gamma}=0\}.$

%\begin{remark}
%The advantages of the proposed weak formulation (\ref{weakform}) lie in the following two aspects: (1) the exact solution of the model problem \eqref{model} admits a low regularity $u\in L^2(\Omega)$; (2) it is easy to deal with the noise of the Cauchy data $g_1$ and $g_2$ when their distributions are known since the Cauchy data are embedded in the weak formulation.
%\end{remark}

Let ${\cal T}_h$ be a partition of the domain $\Omega$ into polygons in 2D or polyhedra in 3D which is shape regular in the sense of \cite{wy3655}. Denote by ${\mathcal E}_h$ the set of all edges or flat faces in ${\cal T}_h$ and  ${\mathcal E}_h^0={\mathcal E}_h \setminus \partial\Omega$ the set of all interior edges or flat faces. Denote by $h_T$ the meshsize of $T\in {\cal T}_h$ and
$h=\max_{T\in {\cal T}_h}h_T$ the meshsize for the partition ${\cal T}_h$. 
% For the convenience of analysis and without loss of generality, in what follows of this paper,  we assume that the coefficient tensor $a$ is a  piecewise constant with respect to the partition $\T_h$. The analysis could be generalized to the case that the coefficient tensor $a$ is a piecewise smooth function with respect to the partition $\T_h$. 

Let $T\in \mathcal{T}_h$ be a polygonal or polyhedral region with
boundary $\partial T$. For any $\phi\in H^{1}(T)$ and any polynomial $\psi$, the following trace inequalities hold true  \cite{wy3655}
\begin{equation}\label{tracein}
 \|\phi\|^2_{\partial T} \lesssim h_T^{-1}\|\phi\|_T^2+h_T  \|\phi\|_{1,T}^2, \qquad
 \|\psi\|^2_{\partial T} \lesssim h_T^{-1}\|\psi\|_T^2.
\end{equation}

A weak function on $T\in\T_h$ is denoted by a triplet $\sigma=\{\sigma_0,\sigma_b, \sigma_n\}$ such that $\sigma_0\in L^2(T)$, $\sigma_b\in L^{2}(\partial T)$ and $\sigma_n\in L^{2}(\partial T)$. The first and the second components, namely $\sigma_0$ and $\sigma_b$, represent the values of $\sigma$ in the interior and on the boundary of $T$ respectively. The third component $\sigma_n$ can be understood as the value of $\nabla \sigma \cdot \bn$ on $\pT$, where $\bn$ is an unit outward normal vector on $\pT$. 
Note that $\sigma_b$ and $\sigma_n$ may not necessarily be the traces of $\sigma_0$ and $\nabla \sigma_0 \cdot \bn$ on $\partial T$. Denote by $\W(T)$ the space of all weak functions on $T$; i.e.,
\begin{equation}\label{2.1}
\W(T)=\{\sigma=\{\sigma_0,\sigma_b, \sigma_n \}: \sigma_0\in L^2(T), \sigma_b\in
L^{2}(\partial T), \sigma_n\in L^{2}(\partial T)\}.
\end{equation}

 The weak Laplacian operator of $\sigma\in \W(T)$, denoted by $\Delta_w  \sigma$, is defined as a linear functional such that
 \begin{eqnarray*}
(\Delta_w \sigma, \phi)_T & :=& (\sigma_0, \Delta \phi)_T -\langle \sigma_b, \nabla \phi\cdot \bm{n}\rangle_{\partial T} +
\langle \sigma_n,\phi\rangle_{\partial T},
\end{eqnarray*}
for all $\phi\in H^2(T)$. 

Denote by $P_r(T)$ the space of polynomials on the element $T$ with degree no more than $r$. A discrete version of $\Delta_w  \sigma$, denoted by $\Delta_{w, r, T}\sigma$, is defined as the unique polynomial in $P_r(T)$ satisfying
\begin{eqnarray}\label{disvergence}
 (\Delta_w \sigma, w)_T & :=& (\sigma_0, \Delta w)_T -\langle \sigma_b, \nabla w\cdot \bm{n}\rangle_{\partial T} +
\langle \sigma_n, w\rangle_{\partial T}, \qquad \forall w\in P_r(T).
\end{eqnarray} 
% which, using the usual integration by parts, gives \begin{eqnarray}\label{disgradient*}
%   (\mathcal{L}_{w,r,T} \sigma, w)_T
%   & = & (\mathcal{L}\sigma_0, w)_T+\langle  \sigma_0-\sigma_b, a \nabla w\cdot
% \bm{n}\rangle_{\partial T}- \langle a\nabla \sigma_0\cdot \bn-\sigma_n, w \rangle_{\partial T}.
% \end{eqnarray} 

\section{Primal-Dual Weak Galerkin Scheme}\label{Section:WGFEM}
Let $W_k(T)$ be the local discrete weak function space; i.e.,
$$
W_k(T)=\{\{\sigma_0, \sigma_b, \sigma_n\}: \sigma_0\in P_k(T), \sigma_b\in P_{k-1}(e),  \sigma_n \in P_{k-1}(e),e\subset \partial T\}.
$$
Patching $W_k(T)$ over all the elements $T\in {\cal T}_h$
through a common value of $\sigma_b$ and $\sigma_n$ on the interior interface $\E_h^0$, we obtain a global weak finite element space $W_h$; i.e.,
$$
W_h=\big\{\{\sigma_0,\sigma_b, \sigma_n\}:\{\sigma_0,\sigma_b, \sigma_n\}|_T\in W_k(T), \forall T\in \mathcal{T}_h \big\}.
$$
We further introduce the subspace of $W_h$ with homogeneous Dirichlet boundary value, denoted by $W_h^0$; i.e.,
$$
W_h^0=\{v\in W_h: \sigma_b=0 \ \text{on}\ \Gamma\}.
$$
%For $k\geq 1$, $s=\max\{k-2, 0\}$. 
Let $M_h$ be the finite element space consisting of piecewise polynomials of degree $k-1$; i.e.,
$$
M_h=\{w: w|_T\in P_{k-1}(T),  \forall T\in \mathcal{T}_h\}.
$$

For simplicity,  for any $\sigma=\{\sigma_0, \sigma_b, \sigma_n\}\in
W_h$, denote by $\Delta_w \sigma$ the discrete weak Laplacian operator
$\Delta_{w, k-1, T} \sigma$ computed by using
(\ref{disvergence}) on each element $T$; i.e.,
$$
(\Delta_w \sigma)|_T=\Delta_{w,k-1,T}(\sigma|_T), \qquad \forall \sigma\in W_h.
$$
 
On each edge or face $e\subset\partial T$, denote by $Q_b$  the $L^2$ projection operator onto $P_{k-1}(e)$. For any $ \lambda, \sigma \in W_h$, and $u\in M_h$, we introduce the following bilinear forms
\begin{eqnarray*}
s(\lambda, \sigma)&=&\sum_{T\in \mathcal{T}_h} s_T(\lambda, \sigma),\label{EQ:s-form}\\
b(u, \sigma)&=&\sum_{T\in \mathcal{T}_h}(u, \Delta_w \sigma)_T,\label{EQ:b-form}
\end{eqnarray*}
where
 \begin{equation*}\label{EQ:sT-form}
\begin{split}
s_T(\lambda, \sigma)=&h_T^{-3}\langle Q_b\lambda_0-\lambda_b, Q_b\sigma_0-\sigma_b\rangle_{\partial T}
 +h_T^{-1}\langle \nabla \lambda_0 \cdot \bn-\lambda_n, \nabla \sigma_0 \cdot \bn-\sigma_n\rangle_{\partial T}.\\
\end{split}
\end{equation*}
 
The primal-dual weak Galerkin finite element scheme based on the weak formulation \eqref{weakform} for the Poisson problem \eqref{model} is described as follows.
\begin{algorithm}[PDWG Scheme]
  Find $(u_h;\lambda_h)\in M_h
\times W_{h}^0$ satisfying
\begin{eqnarray}\label{32}
s(\lambda_h, \sigma)+b(u_h,\sigma)&=&(f,\sigma_0) +\langle g, \sigma_n\rangle_{\Gamma},\quad  \forall \sigma\in W^0_h,\\
b(v, \lambda_h)&=& 0,  \qquad  \qquad  \qquad  \qquad \forall v\in M_{h}.\label{2}
\end{eqnarray}
\end{algorithm}

On each element $T$, denote by $Q_0$ the $L^2$ projection operator onto $P_k(T)$. For any $w\in H^2(\Omega)$, denote by $Q_h w$ the $L^2$ projection
onto the weak finite element space $W_h$ such that on each element
$T$,
$$
Q_hw=\{Q_0w,Q_bw, Q_b(\nabla w \cdot \bn)\}.
$$
Denote by $\mathcal{Q}^{k-1}_h$ the $L^2$ projection operator onto the space $M_h$.
 
\begin{lemma}\label{Lemma5.1}   The $L^2$ projection operators $Q_h$ and $\mathcal{Q}^{k-1}_h$ satisfy the following commuting property:
 \begin{equation}\label{div}
\Delta_{w}(Q_h w) = \mathcal{Q}_h^{k-1}( \Delta w),    \qquad w\in H^2(T).
\end{equation}
\end{lemma}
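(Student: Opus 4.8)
The plan is to verify the commuting identity by testing both sides against an arbitrary polynomial $w \in P_{k-1}(T)$, which suffices since both $\Delta_w(Q_h w)$ and $\mathcal{Q}_h^{k-1}(\Delta w)$ are (by construction) elements of $P_{k-1}(T)$. First I would fix $T \in \mathcal{T}_h$ and $w \in H^2(T)$ (I will rename the test polynomial, say $\varphi \in P_{k-1}(T)$, to avoid clashing with the argument $w$). Starting from the definition \eqref{disvergence} of the discrete weak Laplacian applied to $Q_h w = \{Q_0 w, Q_b w, Q_b(\nabla w\cdot\bn)\}$, I would write
\begin{equation*}
(\Delta_w(Q_h w), \varphi)_T = (Q_0 w, \Delta\varphi)_T - \langle Q_b w, \nabla\varphi\cdot\bn\rangle_{\partial T} + \langle Q_b(\nabla w\cdot\bn), \varphi\rangle_{\partial T}.
\end{equation*}

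The next step is to remove the projections using their defining orthogonality. Since $\Delta\varphi \in P_{k-2}(T) \subset P_k(T)$, the definition of the $L^2$ projection $Q_0$ onto $P_k(T)$ gives $(Q_0 w, \Delta\varphi)_T = (w, \Delta\varphi)_T$. Similarly, $\nabla\varphi\cdot\bn$ and $\varphi$ restricted to each edge/face $e \subset \partial T$ are polynomials of degree at most $k-1$, so the definition of $Q_b$ yields $\langle Q_b w, \nabla\varphi\cdot\bn\rangle_{\partial T} = \langle w, \nabla\varphi\cdot\bn\rangle_{\partial T}$ and $\langle Q_b(\nabla w\cdot\bn), \varphi\rangle_{\partial T} = \langle \nabla w\cdot\bn, \varphi\rangle_{\partial T}$. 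Substituting these in, the right-hand side becomes $(w, \Delta\varphi)_T - \langle w, \nabla\varphi\cdot\bn\rangle_{\partial T} + \langle \nabla w\cdot\bn, \varphi\rangle_{\partial T}$.

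Now I would recognize this expression as exactly what integration by parts (Green's formula, applied twice) produces: for $w \in H^2(T)$ and $\varphi$ smooth,
\begin{equation*}
(\Delta w, \varphi)_T = (w, \Delta\varphi)_T - \langle w, \nabla\varphi\cdot\bn\rangle_{\partial T} + \langle \nabla w\cdot\bn, \varphi\rangle_{\partial T}.
\end{equation*}
Hence $(\Delta_w(Q_h w), \varphi)_T = (\Delta w, \varphi)_T$ for all $\varphi \in P_{k-1}(T)$. Finally, since $\Delta w \in L^2(T)$ and $\mathcal{Q}_h^{k-1}$ is the $L^2$ projection onto $P_{k-1}(T)$, we have $(\Delta w, \varphi)_T = (\mathcal{Q}_h^{k-1}(\Delta w), \varphi)_T$ for all such $\varphi$. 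Combining, $(\Delta_w(Q_h w) - \mathcal{Q}_h^{k-1}(\Delta w), \varphi)_T = 0$ for all $\varphi \in P_{k-1}(T)$, and since the difference itself lies in $P_{k-1}(T)$, it must vanish, giving \eqref{div}.

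I do not anticipate a genuine obstacle here; the only point requiring a little care is bookkeeping the polynomial degrees so that each projection can be discarded against the test function (in particular checking $\Delta\varphi$ has degree $\le k-2$ and the boundary terms have degree $\le k-1$), and making sure the double integration-by-parts identity is invoked with the correct signs and the correct (single) outward normal $\bn$ on $\partial T$. The regularity $w \in H^2(T)$ is exactly enough to justify the trace terms $w|_{\partial T}$ and $(\nabla w\cdot\bn)|_{\partial T}$ appearing in Green's formula.
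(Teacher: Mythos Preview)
Your proof is correct and follows essentially the same approach as the paper: test against an arbitrary $\varphi\in P_{k-1}(T)$, expand via the definition \eqref{disvergence}, drop the projections $Q_0$ and $Q_b$ using their orthogonality against the appropriate polynomial test data, apply Green's formula, and finish with the defining property of $\mathcal{Q}_h^{k-1}$. Your write-up is in fact a bit more explicit than the paper's about why the polynomial degrees line up so that the projections may be removed.
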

\begin{proof}
For any $\phi\in P_{k-1}(T)$, we have from \eqref{disvergence} and the usual integration by parts that 
\begin{eqnarray*} 
 &&(\Delta_{w}(Q_h w), \phi)_T\\
 &=& 
 (Q_0 w,\Delta \phi)_T-\langle Q_b w,  \nabla \phi\cdot
\bm{n}\rangle_{\partial T}+ \langle Q_b(\nabla w\cdot \bn),\phi\rangle_{\partial T} \\
 &=& 
 (w,\Delta \phi)_T-\langle w,  \nabla \phi\cdot
\bm{n}\rangle_{\partial T}+ \langle  \nabla w\cdot \bn,\phi\rangle_{\partial T}\\
 &=&  (\Delta w, \phi)_T 
   =  ( \mathcal{Q}_h^{k-1} \Delta w, \phi)_T.
\end{eqnarray*}
This completes the proof of the lemma. 
\end{proof}
\section{Existence and Uniqueness}
In this section, we shall establish the solution existence and uniqueness of the PDWG scheme (\ref{32})-(\ref{2}). 

In the weak finite element space $W_h$, we introduce a semi-norm induced from the stabilizer; i.e., 
    \begin{equation*}\label{EQ:triplebarnorm}
   \3bar \sigma \3bar = s(\sigma, \sigma)^{\frac{1}{2}}, \qquad \forall  \sigma\in W_h.
   \end{equation*}

  \begin{lemma} \label{lem3-new} 
    ({\it inf-sup} condition)  For any $v\in M_h$, there exists $\rho_v\in W_h^0$ satisfying
\begin{eqnarray} \label{EQ:inf-sup-condition-01}
   b(v, \rho_v) = \| v\|^2, \quad \3bar \rho_v \3bar\leq C\| v\|.
   \end{eqnarray}
  %where $\beta>0$ is a constant independent of the meshsize $h$.
   \end{lemma}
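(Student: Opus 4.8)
The plan is to construct $\rho_v$ explicitly by solving, on each element $T$, a local problem that produces a discrete weak function whose weak Laplacian equals (a scaled version of) $v|_T$, while keeping the stabilizer contribution under control. Since the pairing $b(v,\sigma)=\sum_T (v,\Delta_w\sigma)_T$ decouples over elements in the argument $\sigma$ (the only coupling in $W_h$ is through the shared traces $\sigma_b,\sigma_n$, which do not enter $b$), one is free to choose $\rho_v$ elementwise as long as the interface and boundary conditions for membership in $W_h^0$ are met. A clean way to do this is to take $\rho_v$ with $\rho_{v,b}=0$ and $\rho_{v,n}=0$ on every $\partial T$, so automatically $\rho_v\in W_h^0$, and to choose the interior component $\rho_{v,0}\in P_k(T)$ on each $T$.

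First I would examine the local map $\rho_0\in P_k(T)\mapsto \Delta_{w,k-1,T}\{\rho_0,0,0\}\in P_{k-1}(T)$, defined by $(\Delta_w\{\rho_0,0,0\},w)_T=(\rho_0,\Delta w)_T$ for all $w\in P_{k-1}(T)$. I expect this map to be surjective onto $P_{k-1}(T)$: if its range were a proper subspace, there would be a nonzero $w\in P_{k-1}(T)$ with $(\rho_0,\Delta w)_T=0$ for all $\rho_0\in P_k(T)$, hence $\Delta w=0$ on $T$; but then, combined with a suitable normalization (subtracting the mean, or using that $\Delta$ is injective on $P_{k-1}/P_0$ together with the remaining freedom), one obtains a contradiction — this is exactly the technical point I would need to verify carefully, possibly by instead choosing $w$ to range over $P_{k+1}$-harmonic-complement ideas or by a direct dimension count. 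Granting surjectivity, for the given $v|_T\in P_{k-1}(T)$ I can pick $\rho_{v,0}\in P_k(T)$ with $\Delta_{w,k-1,T}\{\rho_{v,0},0,0\}=v|_T$ and with $\|\rho_{v,0}\|_T\lesssim h_T^0\|v\|_T$ — the norm bound coming from a scaling/equivalence-of-norms argument on the reference element applied to a minimal-norm (pseudoinverse) selection, yielding a constant depending only on $k$, $d$, and shape regularity.

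With this choice, $b(v,\rho_v)=\sum_T(v,\Delta_w\rho_v)_T=\sum_T(v,v)_T=\|v\|^2$, which is the first identity in \eqref{EQ:inf-sup-condition-01}. For the second, I compute
\[
\3bar\rho_v\3bar^2=\sum_{T}\Big(h_T^{-3}\|Q_b\rho_{v,0}\|_{\partial T}^2+h_T^{-1}\|\nabla\rho_{v,0}\cdot\bn\|_{\partial T}^2\Big),
\]
using $\rho_{v,b}=\rho_{v,n}=0$. Each term is estimated by the polynomial trace inequality \eqref{tracein}: $\|Q_b\rho_{v,0}\|_{\partial T}^2\le\|\rho_{v,0}\|_{\partial T}^2\lesssim h_T^{-1}\|\rho_{v,0}\|_T^2$ and $\|\nabla\rho_{v,0}\cdot\bn\|_{\partial T}^2\lesssim h_T^{-1}\|\nabla\rho_{v,0}\|_T^2\lesssim h_T^{-3}\|\rho_{v,0}\|_T^2$ by an inverse inequality. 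Hence each elementwise contribution is $\lesssim h_T^{-4}\|\rho_{v,0}\|_T^2$. This reveals the one genuine subtlety: the naive bound $\|\rho_{v,0}\|_T\lesssim\|v\|_T$ would only give $\3bar\rho_v\3bar\lesssim h^{-2}\|v\|$, which is not good enough. To fix the powers of $h_T$ I would instead solve the \emph{scaled} local problem, choosing $\rho_{v,0}$ so that $\Delta_{w,k-1,T}\{\rho_{v,0},0,0\}=v|_T$ with the sharp scaling estimate $\|\rho_{v,0}\|_T\lesssim h_T^{2}\|v\|_T$ (which is the correct scaling, since $\Delta_w$ behaves like a second derivative, i.e. like $h_T^{-2}$); this is what the reference-element argument actually delivers when done with care. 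Then $h_T^{-4}\|\rho_{v,0}\|_T^2\lesssim\|v\|_T^2$, and summing over $T$ gives $\3bar\rho_v\3bar\le C\|v\|$, completing the proof. The main obstacle, then, is the combined surjectivity-plus-sharp-scaling claim for the local map $\rho_0\mapsto\Delta_{w,k-1,T}\{\rho_0,0,0\}$; everything else is routine trace and inverse estimates together with the elementwise decoupling of $b(\cdot,\cdot)$.
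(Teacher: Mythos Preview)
Your local construction breaks at the surjectivity step. The map $\rho_0\in P_k(T)\mapsto\Delta_{w,k-1,T}\{\rho_0,0,0\}\in P_{k-1}(T)$ is \emph{never} onto: since $(\Delta_w\{\rho_0,0,0\},w)_T=(\rho_0,\Delta w)_T$, its range is exactly the $L^2(T)$-orthogonal complement in $P_{k-1}(T)$ of the harmonic polynomials $\{w\in P_{k-1}(T):\Delta w=0\}$. Constants are always harmonic, so this range is always a proper subspace; worse, for $k=1$ and $k=2$ every $w\in P_{k-1}(T)$ is harmonic and the range is $\{0\}$. The contradiction you hoped for (``$\Delta$ injective on $P_{k-1}/P_0$'') is simply false once $k\ge2$: $x$ is harmonic and nonconstant. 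With $\rho_b=\rho_n=0$ you therefore cannot hit a generic $v\in M_h$, and no scaling argument can repair this.

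The paper's proof is global rather than local. It solves the auxiliary Dirichlet problem $\Delta w=v$ in $\Omega$, $w=0$ on $\Gamma$, invokes $H^2$-regularity to obtain $\|w\|_2\le C\|v\|$, and sets $\rho_v=Q_hw=\{Q_0w,\,Q_bw,\,Q_b(\nabla w\cdot\bn)\}$. The commuting identity $\Delta_w Q_hw=\mathcal{Q}_h^{k-1}\Delta w$ then gives $b(v,\rho_v)=(v,\mathcal{Q}_h^{k-1}\Delta w)=(v,v)=\|v\|^2$ directly, while $\3bar\rho_v\3bar\le C\|v\|$ follows from standard projection-error and trace estimates combined with the $H^2$ bound on $w$. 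Note that all three components of $\rho_v$ are generally nonzero here; it is precisely the boundary pieces $\rho_b,\rho_n$ that supply, through the weak Laplacian, the harmonic part of $v$ that your purely interior choice cannot reach.
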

   \begin{proof}
  Consider an auxiliary problem that seeks $w$ such that 
\begin{equation}\label{dual_2}
    \begin{split}
         \Delta w=& v,\qquad \text{in}\ \Omega,\\
       w=&0, \qquad \text{on}\ \Gamma.
    \end{split}
\end{equation}
We assume that the problem \eqref{dual_2} has the $H^{2}$-regularity; i.e., there exists a constant $C$ independent of $w$ satisfying
\begin{equation}\label{dual:regularity_2}
\|w\|_{2} \leq C \|v\|.
\end{equation}

We claim that $\rho_v=Q_h w=\{Q_0w, Q_bw, Q_b(\nabla w\cdot\bn)\}$ satisfies \eqref{EQ:inf-sup-condition-01}. Using the commutative property \eqref{div} and \eqref{dual_2} gives
  \begin{equation*}
      \begin{split}
          b(v, \rho_v) &=\sum_{T\in {\cal T}_h} (v, \Delta_w \rho_v)_T
          \\&=\sum_{T\in {\cal T}_h} (v, \Delta_w Q_h w)_T\\
          &=\sum_{T\in {\cal T}_h} (v, {\cal Q}_h^{k-1}\Delta w)_T\\
           &=\sum_{T\in {\cal T}_h} (v,  \Delta w)_T \\
            &=\sum_{T\in {\cal T}_h} (v, v)_T \\
            &= \|v\|^2.
      \end{split}
  \end{equation*}
  
  From the trace inequality (\ref{tracein}), and $H^{2}$-regularity \eqref{dual:regularity_2}, we have
   \begin{equation}\label{EQ:Estimate:002}
   \begin{split}
   &\sum_{T\in \mathcal{T}_h }h_T^{-3}\|Q_b\rho_0-\rho_b\|_{\pT}^2\\
  = &\sum_{T\in \mathcal{T}_h }h_T^{-3}\|Q_b Q_0w-Q_bw\|_{\pT}^2\\
     \leq&\sum_{T\in \mathcal{T}_h}
    h_T^{-3}\|Q_0w- w \|_{\pT}^2\\
  \leq & C\sum_{T\in \mathcal{T}_h}
    h_T^{-4}\|Q_0w- w \|_{T}^2+  h_T^{-2}\|Q_0w- w \|_{1,T}^2\\ 
        \leq &   C \|w\|_2^2 \\
  \leq & C \|v\|^2.
   \end{split}
   \end{equation}   
 Analogously, using the trace inequality \eqref{tracein}, and $H^{2}$-regularity \eqref{dual:regularity_2} gives
   \begin{equation}\label{EQ:Estimate:003}
    \begin{split}
    &\sum_{T\in \mathcal{T}_h }h_T^{-1}\|\nabla
    \rho_0\cdot\bn-\rho_n\|_{\pT}^2\\
   = & \sum_{T\in \mathcal{T}_h }h_T^{-1}\|\nabla
    Q_0 w \cdot\bn-Q_b(\nabla w\cdot\bn)\|_{\pT}^2\\
    \leq & \sum_{T\in \mathcal{T}_h }h_T^{-1}\|\nabla
    Q_0 w \cdot\bn- \nabla w\cdot\bn\|_{\pT}^2\\
        \leq &C \sum_{T\in \mathcal{T}_h }h_T^{-2} \|\nabla
    (Q_0-I) w \cdot\bn \|_{T}^2+ \|\nabla
    (Q_0-I) w \cdot\bn \|_{1,T}^2\\
         \leq &   C  \|w\|_2^2 \\
  \leq & C  \|v\|^2.
    \end{split}
   \end{equation}
   Combining the estimates (\ref{EQ:Estimate:002})-(\ref{EQ:Estimate:003}) yields
    \begin{equation*}\label{EQ:inf-sup-condition-02}
   \3bar \rho_v\3bar \leq C \| v\|.
    \end{equation*}   
    
    This completes the proof of the lemma.
\end{proof} 
%    

%   Next, using (\ref{disvergence}) yields 
%   \begin{equation}\label{EQ:April05:100}
%   \begin{split}
%   b(v, \rho) = & \sum_{T\in \mathcal{T}_h}(v, \mathcal{L}_w \rho)_T  \\
%   =&\sum_{T\in \mathcal{T}_h}(\rho_0,  \mathcal{L} v)_T-\langle \rho_b, a \nabla v\cdot \bn\rangle_{\pT}+\langle \rho_n, v\rangle_{\pT}\\
%     =&\sum_{T\in \mathcal{T}_h}(\rho_0,  \mathcal{L} v)_T-\sum_{e\in \mathcal{E}_h}\langle \rho_b,  [[a \nabla v\cdot \bn]]\rangle_{e}+\sum_{e\in \mathcal{E}_h}\langle \rho_n, [[v]]\rangle_{e}\\
%   =&\sum_{T\in \mathcal{T}_h}(\rho_0,  \mathcal{L} v)_T-\sum_{e\in \mathcal{E}_h^0}\langle \rho_b,  [[a \nabla v\cdot \bn]]\rangle_{e}+\sum_{e\in \mathcal{E}_h  }\langle \rho_n, [[v]]\rangle_{e}\\
%   =&\sum_{T\in \mathcal{T}_h}h_T^2 \|\mathcal{L} v\|^2_T+\sum_{e\in \mathcal{E}_h^0}h_T\|[[a \nabla v\cdot \bn]]\|^2_{e}+\sum_{e\in \mathcal{E}_h}h_T^{-1}\|[[v]]\|^2_{e}\\
%   =&\ \3bar v\3bar ^2,
%   \end{split}
%   \end{equation}
%   where we used which, together with \eqref{EQ:inf-sup-condition-02}, completes the proof of the lemma.
%   \end{proof}

\begin{theorem}\label{thmunique1}
% Assume that $\Gamma_D\cap\Gamma_N$ contains a nontrivial portion of the domain boundary $\partial\Omega$ and $\Gamma_D\cup \Gamma_N\Subset \partial\Omega$ is a proper closed subset. 
The PDWG finite element algorithm (\ref{32})-(\ref{2}) has one and only one solution.
\end{theorem}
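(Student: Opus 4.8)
The plan is to use that the algorithm \eqref{32}--\eqref{2} is a square linear system in the finitely many degrees of freedom of $(u_h;\lambda_h)\in M_h\times W_h^0$, so that its solvability for an arbitrary right-hand side is equivalent to its uniqueness; it therefore suffices to show that the homogeneous problem (with $f\equiv0$ and $g\equiv0$) admits only the trivial solution $(u_h;\lambda_h)=(0;0)$.

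Assume $f=0$ and $g=0$. First I would take $\sigma=\lambda_h\in W_h^0$ in \eqref{32} and $v=u_h\in M_h$ in \eqref{2}. The coupling term $b(u_h,\lambda_h)=\sum_{T\in\mathcal{T}_h}(u_h,\Delta_w\lambda_h)_T$ occurs in both, so combining the two identities leaves $s(\lambda_h,\lambda_h)=0$, i.e.\ $\3bar \lambda_h \3bar=0$. By the explicit form of $s_T$, this forces
$$
Q_b\lambda_0=\lambda_b \quad\text{and}\quad \nabla\lambda_0\cdot\bn=\lambda_n \quad\text{on }\partial T,\qquad \forall T\in\mathcal{T}_h,
$$
the second being an identity in $P_{k-1}(e)$ on each $e\subset\partial T$. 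Since $s(\cdot,\cdot)$ is symmetric and positive semidefinite, the Cauchy--Schwarz inequality gives $s(\lambda_h,\sigma)=0$ for all $\sigma\in W_h^0$, so the homogeneous \eqref{32} reduces to $b(u_h,\sigma)=0$ for all $\sigma\in W_h^0$. Invoking the \emph{inf-sup} condition of Lemma~\ref{lem3-new} with $v=u_h$ yields $\rho_{u_h}\in W_h^0$ with $b(u_h,\rho_{u_h})=\|u_h\|^2$; hence $\|u_h\|^2=0$ and $u_h=0$.

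It remains to show $\lambda_h=0$. Substituting the identities $\lambda_b=Q_b\lambda_0$, $\lambda_n=\nabla\lambda_0\cdot\bn$ into the definition \eqref{disvergence} of $\Delta_w\lambda_h$ and integrating by parts (exactly as in the proof of Lemma~\ref{Lemma5.1}) shows $(\Delta_w\lambda_h,w)_T=(\Delta\lambda_0,w)_T$ for all $w\in P_{k-1}(T)$; as $\Delta\lambda_0\in P_{k-1}(T)$, the uniqueness in \eqref{disvergence} gives $\Delta_w\lambda_h=\Delta\lambda_0$ on each $T$. Choosing $v=\Delta_w\lambda_h\in M_h$ in the homogeneous \eqref{2} then yields $\sum_{T\in\mathcal{T}_h}\|\Delta\lambda_0\|_T^2=0$, so $\lambda_0$ is harmonic on each $T$. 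An element-wise integration by parts now gives
$$
\sum_{T\in\mathcal{T}_h}\|\nabla\lambda_0\|_T^2=\sum_{T\in\mathcal{T}_h}\langle\nabla\lambda_0\cdot\bn,\lambda_0\rangle_{\partial T}=\sum_{T\in\mathcal{T}_h}\langle\nabla\lambda_0\cdot\bn,\lambda_b\rangle_{\partial T},
$$
the last step being legitimate because $\nabla\lambda_0\cdot\bn\in P_{k-1}(e)$ on each face and $Q_b\lambda_0=\lambda_b$. Using again $\nabla\lambda_0\cdot\bn=\lambda_n$ on $\partial T$ together with the weak continuity of the component $\lambda_n$ across $\mathcal{E}_h^0$ (so that the outward normal derivatives of $\lambda_0$ from the two neighboring elements are opposite on each interior face) and the boundary condition $\lambda_b=0$ on $\Gamma$, the right-hand side vanishes. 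Hence $\nabla\lambda_0=0$ on each $T$, i.e.\ $\lambda_0$ is piecewise constant; the weak continuity of $\lambda_b=Q_b\lambda_0$ together with $\lambda_b=0$ on $\Gamma$ and the connectedness of $\Omega$ then force $\lambda_0\equiv0$, whence $\lambda_b=Q_b\lambda_0=0$ and $\lambda_n=\nabla\lambda_0\cdot\bn=0$; thus $\lambda_h=0$.

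The genuinely delicate point is the last step, where $\3bar \lambda_h \3bar=0$ must be upgraded to $\lambda_h=0$: here the piecewise harmonicity supplied by \eqref{2}, the weak continuity encoded in $W_h^0$, and the homogeneous boundary condition all enter, and one must track the orientation of the normal component $\sigma_n$ carefully so that the interface terms in the energy identity cancel. By comparison, the square-system reduction and the step $u_h=0$ via Lemma~\ref{lem3-new} are routine.
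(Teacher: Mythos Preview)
Your proof is correct and follows essentially the same approach as the paper: reduce to the homogeneous system, exploit $s(\lambda_h,\lambda_h)=0$ to obtain $Q_b\lambda_0=\lambda_b$ and $\nabla\lambda_0\cdot\bn=\lambda_n$, use \eqref{2} to get $\Delta\lambda_0=0$ elementwise, then integrate by parts and use the interface/boundary structure of $W_h^0$ to force $\lambda_h=0$, and finally invoke the inf-sup condition of Lemma~\ref{lem3-new} for $u_h=0$. The only (minor) difference is the order: you first kill $s(\lambda_h,\sigma)$ for all $\sigma$ via Cauchy--Schwarz for the semidefinite form and conclude $u_h=0$ before finishing $\lambda_h=0$, whereas the paper establishes $\lambda_h=0$ completely first and only then turns to $u_h$; the ingredients are identical.
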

\begin{proof} It suffices to show that zero is the unique solution to the PDWG scheme (\ref{32})-(\ref{2}) with homogeneous data $f=0$  and $g=0$. To this end, assume $f=0$ and $g=0$ in (\ref{32})-(\ref{2}). By letting $v=u_h$ and $\sigma=\lambda_h$, the difference of (\ref{2}) and (\ref{32}) gives $s(\lambda_h,\lambda_h)=0$, which implies $Q_b\lambda_0=\lambda_b$ and $\nabla \lambda_0 \cdot \bn=\lambda_n$ on each $\partial T$. %Therefore, together with the fact that $\lambda_h\in W_h^0$, we obtain $\lambda_0=0$ on $\Gamma$.

It follows from (\ref{2}), \eqref{disvergence} and the usual integration by parts that for any $v\in M_h$
\begin{equation}
\begin{split}
0=&b(v, \lambda_h)\\
=&\sum_{T\in \mathcal{T}_h}(v, \Delta_w \lambda_h)_T \\
=&\sum_{T\in \mathcal{T}_h}  (\Delta\lambda_0, v)_T+\langle Q_b\lambda_0- \lambda_b, \nabla v\cdot
\bm{n}\rangle_{\partial T}- \langle  \nabla \lambda_0\cdot \bn-\lambda_n, v \rangle_{\partial T}
\\
 =&\sum_{T\in \mathcal{T}_h}  (\Delta\lambda_0, v)_T,\\
\end{split}
\end{equation}
where we have used $Q_b\lambda_0=\lambda_b$ and $\nabla \lambda_0 \cdot \bn=\lambda_n$ on each $\partial T$. This implies $\Delta\lambda_0=0$ on each element $T\in\mathcal{T}_h$ by taking $v= \Delta\lambda_0$. 
% From divergence theorem, we have
% \begin{equation}
%     \sum_{T\in {\cal T}_h}\int_T \mathcal{L}\lambda_0 dT= \sum_{T\in {\cal T}_h}\int_T a\nabla\lambda_0\cdot\bn ds=
%     \int_{\partial\Omega} a\nabla\lambda_0\cdot\bn ds.
% \end{equation}

We have from the fact $\Delta\lambda_0=0$ on each element $T\in\mathcal{T}_h$  and the usual integration by parts that
\begin{equation*}
    \begin{split}
      0= &\sum_{T\in {\cal T}_h}\int_T \Delta\lambda_0 \lambda_0 dT\\
       =&   \sum_{T\in {\cal T}_h}-\int_T \nabla\lambda_0 \cdot\nabla\lambda_0dT+\int_{\partial T} \nabla\lambda_0\cdot\bn \lambda_0 ds\\
       =&  \sum_{T\in {\cal T}_h}-\int_T  \nabla\lambda_0 \cdot\nabla\lambda_0dT+\int_{\partial T}  \nabla\lambda_0\cdot\bn Q_b\lambda_0 ds\\
         =& \sum_{T\in {\cal T}_h}-\int_T   \nabla\lambda_0 \cdot\nabla\lambda_0dT+\int_{\partial T}  \nabla\lambda_0\cdot\bn  \lambda_b ds\\
   =&-  \sum_{T\in {\cal T}_h}\int_T   \nabla\lambda_0 \cdot\nabla\lambda_0dT+\int_{\partial T} \lambda_n    \lambda_b ds\\ =&-  \sum_{T\in {\cal T}_h}\int_T   \nabla\lambda_0 \cdot\nabla\lambda_0dT,     
    \end{split}
\end{equation*}
where we used $Q_b\lambda_0=\lambda_b$ and $\lambda_n=\nabla\lambda_0\cdot\bn$ on each $\partial T$, and $\lambda_b=0$ on $\partial\Omega$. This leads to $\nabla \lambda_0=0$ on each $\partial T$. Therefore, we have $\lambda_0=C$ on each $T$. Using $Q_b\lambda_0=\lambda_b$  on each $\partial T$ we have $\lambda_b=C$ on each $\partial T$. Using  $\lambda_b=0$ on $\partial\Omega$, we have   $\lambda_b=0$ in $\Omega$ and further $\lambda_0=0$ in $\Omega$. Using $ \nabla \lambda_0 \cdot \bn=\lambda_n$ on each $\partial T$, we obtain $\lambda_n=0$  in $\Omega$. Therefore, we obtain $\lambda_h\equiv 0$ in $\Omega$.

Next we shall demonstrate $u_h\equiv 0$ in $\Omega$.   Using $\lambda_h\equiv 0$ in $\Omega$ and the equation (\ref{32}) gives
$$
b(u_h, \sigma)=0. 
$$
Using the inf-sup condition  \eqref{EQ:inf-sup-condition-01}, there exists a $\sigma\in W_h^0$ such that 
$$
0=b(u_h, \sigma)= \|u_h\|^2,
$$
which yields $u_h\equiv 0$ in $\Omega$. 

This completes the proof of the theorem.
\end{proof}

\section{Error Equations}\label{Section:error-equations} This section is devoted to deriving the error equations for the PDWG scheme (\ref{32})-(\ref{2}) which will play a critical role in establishing the error estimates in the following section.

Let $u$ and $(u_h, \lambda_h) \in  M_{h}\times W_{h}^0$ be the exact solution of the Poisson problem (\ref{model}) and its numerical approximation arising from the PDWG scheme (\ref{32})-(\ref{2}). Note that the Lagrange multiplier $\lambda_h$ approximates the trivial solution $\lambda=0$. 
The error functions are defined as the difference between the numerical solution $(u_h, \lambda_h)$ and the $L^2$ projection of the exact solution $u$ of (\ref{model}); i.e.,
\begin{align}\label{error}
e_h&=u_h-\mathcal{Q}^{k-1}_hu,\\
 \varepsilon_h&=\lambda_h-Q_h\lambda=\lambda_h.\label{error-2}
\end{align}

\begin{lemma}\label{Lemma:LocalEQ} 
For any $\sigma\in W_{h}$ and $v\in M_{h}$, the following identity holds true:
\begin{equation}\label{June30:008}
(\Delta_w \sigma, v)_T = (\Delta\sigma_0, v)_T +R_T(\sigma, v),
\end{equation}
where
\begin{equation}\label{EQ:April:06:100}
\begin{split}
R_T(\sigma,v) = \langle \sigma_0-\sigma_b, \nabla  v\cdot
\bm{n}\rangle_{\partial T}-\langle  \nabla \sigma_0\cdot \bn-\sigma_n,    v \rangle_{\partial T}.
\end{split}
\end{equation}
\end{lemma}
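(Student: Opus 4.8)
The plan is to prove the identity \eqref{June30:008} by starting from the definition \eqref{disvergence} of the discrete weak Laplacian and transferring the derivatives off the test polynomial $v$ via integration by parts, which will produce the volumetric term $(\Delta\sigma_0,v)_T$ plus two boundary contributions that assemble into $R_T(\sigma,v)$. Concretely, I would first note that since $v\in M_h$ means $v|_T\in P_{k-1}(T)$, the polynomial $v$ is an admissible test function $w$ in \eqref{disvergence}, so
\begin{equation*}
(\Delta_w\sigma,v)_T = (\sigma_0,\Delta v)_T - \langle\sigma_b,\nabla v\cdot\bn\rangle_{\partial T} + \langle\sigma_n,v\rangle_{\partial T}.
\end{equation*}

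**Key steps.**
Next I would apply Green's formula twice to the term $(\sigma_0,\Delta v)_T$ with $\sigma_0\in P_k(T)\subset H^2(T)$ and $v\in P_{k-1}(T)$: one integration by parts gives $(\sigma_0,\Delta v)_T = -(\nabla\sigma_0,\nabla v)_T + \langle\sigma_0,\nabla v\cdot\bn\rangle_{\partial T}$, and a second integration by parts on the volume term gives $-(\nabla\sigma_0,\nabla v)_T = (\Delta\sigma_0,v)_T - \langle\nabla\sigma_0\cdot\bn,v\rangle_{\partial T}$. Substituting back yields
\begin{equation*}
(\Delta_w\sigma,v)_T = (\Delta\sigma_0,v)_T + \langle\sigma_0-\sigma_b,\nabla v\cdot\bn\rangle_{\partial T} - \langle\nabla\sigma_0\cdot\bn-\sigma_n,v\rangle_{\partial T},
\end{equation*}
which is precisely \eqref{June30:008} with $R_T(\sigma,v)$ as defined in \eqref{EQ:April:06:100}. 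The argument is purely a rearrangement of boundary terms, so it requires only collecting the $\nabla v\cdot\bn$ contributions (namely $\langle\sigma_0,\nabla v\cdot\bn\rangle_{\partial T}$ from the first Green identity and $-\langle\sigma_b,\nabla v\cdot\bn\rangle_{\partial T}$ already present) and the $v$-contributions ($\langle\sigma_n,v\rangle_{\partial T}$ already present and $-\langle\nabla\sigma_0\cdot\bn,v\rangle_{\partial T}$ from the second Green identity).

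**Main obstacle.**
There is essentially no obstacle here: the only thing to be careful about is that the integration by parts is legitimate, which it is because $\sigma_0$ is a polynomial on $T$ and $T$ has Lipschitz boundary, so all traces and normal derivatives in the boundary integrals are well defined. I would simply remark that \eqref{disvergence} is applied with $w=v$ (valid since $v|_T\in P_{k-1}(T)$) and that the two applications of Green's theorem require no regularity beyond what polynomials automatically provide. Thus the proof is a short computation and the statement follows directly.
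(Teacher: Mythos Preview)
Your proposal is correct and follows exactly the approach the paper indicates: apply the definition \eqref{disvergence} with $w=v\in P_{k-1}(T)$ and use integration by parts (twice) on $(\sigma_0,\Delta v)_T$ to produce $(\Delta\sigma_0,v)_T$ together with the boundary terms that assemble into $R_T(\sigma,v)$. The paper's own proof is a one-line reference to \eqref{disvergence} and ``the usual integration by parts,'' which is precisely the computation you wrote out in detail.
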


\begin{proof}
This proof can be easily obtained by using (\ref{disvergence}) and the usual integration by parts.
\end{proof}

\begin{lemma}\label{errorequa}
Let $u$ and $(u_h;\lambda_h) \in M_{h}\times W_{h}^0$ be the solutions arising from (\ref{model}) and (\ref{32})-(\ref{2}), respectively. 
The error functions $e_h$ and $\varepsilon_h$ satisfy the following error equations:
\begin{eqnarray}\label{sehv}
s(\varepsilon_h, \sigma)+b(e_h, \sigma)&=&\ell_u(\sigma),\qquad \forall\  \sigma\in W_{h}^0,\\
b(v, \varepsilon_h)&=&0,\qquad\qquad \forall v\in M_{h}, \label{sehv2}
\end{eqnarray}
where $\ell_u(\sigma)$ is given by
\begin{equation}\label{lu}
\begin{split}
\qquad \ell_u(\sigma) =&\sum_{T\in \mathcal{T}_h} \langle {\color{red} \sigma_0}-\sigma_b,  \nabla (u-\mathcal{Q}_h^{k-1}u)\cdot \bm{n}\rangle_{\partial T}\\&-\langle \nabla \sigma_0\cdot \bn-\sigma_n, u-\mathcal{Q}_h^{k-1}u\rangle_{\partial T}.
\end{split}
\end{equation}
\end{lemma}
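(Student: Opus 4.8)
The plan is to derive the two error equations by subtracting from each equation of the PDWG scheme \eqref{32}--\eqref{2} the corresponding quantity evaluated at the $L^2$ projection $\mathcal{Q}_h^{k-1}u$ of the exact solution, and then to recognize the residual as the functional $\ell_u$ of \eqref{lu} through integration by parts. The second identity \eqref{sehv2} is immediate: by \eqref{error-2} we have $\varepsilon_h=\lambda_h$, so \eqref{2} reads $b(v,\varepsilon_h)=b(v,\lambda_h)=0$ for every $v\in M_h$.

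For \eqref{sehv}, using $\varepsilon_h=\lambda_h$ and $e_h=u_h-\mathcal{Q}_h^{k-1}u$ and subtracting $b(\mathcal{Q}_h^{k-1}u,\sigma)$ from both sides of \eqref{32} gives, for all $\sigma\in W_h^0$,
\[
s(\varepsilon_h,\sigma)+b(e_h,\sigma)=(f,\sigma_0)+\langle g,\sigma_n\rangle_\Gamma-b(\mathcal{Q}_h^{k-1}u,\sigma),
\]
so it is enough to show $b(\mathcal{Q}_h^{k-1}u,\sigma)=(f,\sigma_0)+\langle g,\sigma_n\rangle_\Gamma-\ell_u(\sigma)$. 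I would expand $b(\mathcal{Q}_h^{k-1}u,\sigma)=\sum_{T}(\Delta_w\sigma,\mathcal{Q}_h^{k-1}u)_T$ by Lemma \ref{Lemma:LocalEQ} with $v=\mathcal{Q}_h^{k-1}u\in M_h$, and then write $\mathcal{Q}_h^{k-1}u=u-(u-\mathcal{Q}_h^{k-1}u)$. Since $\Delta\sigma_0\in P_{k-2}(T)\subseteq P_{k-1}(T)$, the $L^2$ projection property gives $(\Delta\sigma_0,u-\mathcal{Q}_h^{k-1}u)_T=0$, hence $(\Delta\sigma_0,\mathcal{Q}_h^{k-1}u)_T=(\Delta\sigma_0,u)_T$; and since $R_T(\sigma,\cdot)$ is linear, $\sum_T R_T(\sigma,\mathcal{Q}_h^{k-1}u)=\sum_T R_T(\sigma,u)-\sum_T R_T(\sigma,u-\mathcal{Q}_h^{k-1}u)$, where the last sum is precisely $\ell_u(\sigma)$ upon comparing \eqref{EQ:April:06:100} with \eqref{lu}. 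This reduces the claim to the elementwise identity
\[
\sum_{T\in\mathcal{T}_h}\big[(\Delta\sigma_0,u)_T+R_T(\sigma,u)\big]=(f,\sigma_0)+\langle g,\sigma_n\rangle_\Gamma.
\]

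To prove this last identity I would integrate by parts twice on each $T$ and use $\Delta u=f$ to get $(\Delta\sigma_0,u)_T=(\sigma_0,f)_T-\langle\sigma_0,\nabla u\cdot\bn\rangle_{\partial T}+\langle\nabla\sigma_0\cdot\bn,u\rangle_{\partial T}$; adding $R_T(\sigma,u)$ from \eqref{EQ:April:06:100} cancels the $\langle\sigma_0,\nabla u\cdot\bn\rangle_{\partial T}$ and $\langle\nabla\sigma_0\cdot\bn,u\rangle_{\partial T}$ terms and leaves $(\sigma_0,f)_T-\langle\sigma_b,\nabla u\cdot\bn\rangle_{\partial T}+\langle\sigma_n,u\rangle_{\partial T}$. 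Summing over $T$: because $\sigma_b$ and $\sigma_n$ are single-valued on interior faces while $u$ and $\nabla u\cdot\bn$ are continuous across them, all interior-face contributions cancel, and the remaining boundary contributions collapse to $\langle g,\sigma_n\rangle_\Gamma$ since $\sigma_b=0$ on $\Gamma$ (as $\sigma\in W_h^0$) and $u=g$ on $\Gamma$. This establishes the identity above, hence \eqref{sehv}, and completes the proof.

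The integration by parts and the projection orthogonality are routine; the one step needing care is the face and boundary bookkeeping in the last paragraph — in particular, tracking the orientation convention for the normal-flux component $\sigma_n$ so that the two contributions from an interior face genuinely cancel against the single-valued trace of $u$, and having enough regularity on $u$ (e.g. $u\in H^2(\Omega)$) for the traces $u|_{\partial T}$, $(\nabla u\cdot\bn)|_{\partial T}$, and their interelement continuity to be meaningful.
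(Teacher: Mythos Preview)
Your proof is correct and follows essentially the same route as the paper's. Both arguments reduce \eqref{sehv} to computing $b(\mathcal{Q}_h^{k-1}u,\sigma)$ via Lemma~\ref{Lemma:LocalEQ}, use the projection orthogonality $(\Delta\sigma_0,\mathcal{Q}_h^{k-1}u)_T=(\Delta\sigma_0,u)_T$, integrate by parts on $(\Delta\sigma_0,u)_T$, and finish with the interior-face cancellations from the single-valuedness of $\sigma_b,\sigma_n$ together with $\sigma_b|_\Gamma=0$; your only cosmetic difference is that you invoke the linearity of $R_T(\sigma,\cdot)$ to isolate $\ell_u(\sigma)=\sum_T R_T(\sigma,u-\mathcal{Q}_h^{k-1}u)$ before doing the boundary bookkeeping, whereas the paper first expands the boundary terms involving $u$ and then recognizes the residual.
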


\begin{proof} From (\ref{error-2}) and (\ref{2}) we have
\begin{align*}
b(v, \varepsilon_h) = b(v, \lambda_h) = 0,\qquad \forall v\in M_{h},
\end{align*}
which gives rise to (\ref{sehv2}).

Recall that $\lambda=0$. From (\ref{32}) we arrive at
\begin{equation}\label{EQ:April:04:100}
\begin{split}
 & s(\lambda_h-Q_h\lambda, \sigma)+b(u_h-\mathcal{Q}^{k-1}_hu, \sigma) \\
   = &(f, \sigma_0)+\langle g, \sigma_n\rangle_{\Gamma}-b(\mathcal{Q}^{k-1}_hu, \sigma).
\end{split}
\end{equation}
As to the term $b(\mathcal{Q}^{k-1}_hu, \sigma)$, using Lemma  \ref{Lemma:LocalEQ} and the usual integration by parts gives
\begin{equation}\label{EQ:April:04:103}
\begin{split}
&b(\mathcal{Q}^{k-1}_hu, \sigma) \\
= & \sum_{T\in \mathcal{T}_h} (\mathcal{Q}^{k-1}_hu,  \Delta_w \sigma)_T \\
=& \sum_{T\in \mathcal{T}_h} (\Delta\sigma_0, \mathcal{Q}_h^{k-1}u)_T +R_T(\sigma, \mathcal{Q}_h^{k-1}u)\\
= & \sum_{T\in \mathcal{T}_h} (\Delta\sigma_0, u)_T   +R_T(\sigma, \mathcal{Q}_h^{k-1}u)\\
=&\sum_{T\in \mathcal{T}_h}  (\sigma_0,  \Delta u)_T-\langle {\color{red}{\sigma_0}}, \nabla u \cdot\bn \rangle_{\partial T}+\langle \nabla \sigma_0\cdot \bn, u\rangle_{\partial T} +R_T(\sigma, \mathcal{Q}_h^{k-1}u)\\
=& (\sigma_0, f) +\sum_{T\in \mathcal{T}_h} -\langle \sigma_0-\sigma_b, \nabla u \cdot\bn \rangle_{\partial T}+\langle \nabla \sigma_0\cdot \bn-\sigma_n, u\rangle_{\partial T} \\& +R_T(\sigma, \mathcal{Q}_h^{k-1}u)+\langle \sigma_n, g\rangle_{\Gamma},\\
\end{split}
\end{equation}
where we used  $\sum_{T\in \mathcal{T}_h}  \langle \sigma_b, \nabla u \cdot \bn \rangle_{\partial T} = 
\langle \sigma_b, \nabla u \cdot \bn \rangle_{\Gamma}
$,  $\sigma_b=0$ on $\Gamma$, (\ref{model}) and
$\sum_{T\in \mathcal{T}_h}  \langle \sigma_n, u\rangle_{\partial T} = 
\langle \sigma_n, g\rangle_{\Gamma}
$.

Substituting (\ref{EQ:April:04:103})  into (\ref{EQ:April:04:100}) gives rise to the error equation (\ref{sehv}), which completes the proof of the lemma.
\end{proof}

A weak function $\sigma\in W_h^0$ is said to be {\em weakly harmonic} if $\Delta_w \sigma =0$. The error equation \eqref{sehv2} asserts that the error function $\varepsilon_h$ is weakly harmonic.

\begin{lemma}\label{weakly-harmonic}
The following estimate holds true for any weak finite element functions $\sigma\in W_h^0$:
\begin{eqnarray}\label{June30:001}
\sum_{T\in \T_h} \|\nabla \sigma_0\|_T^2 \leq C_1 \|\Delta_w \sigma\|_0^2 + C_2 h^2 s(\sigma,\sigma). 
\end{eqnarray}
In particular, for weakly harmonic finite element functions, we have
\begin{eqnarray}\label{June30:002}
\sum_{T\in \T_h} \|\nabla \sigma_0\|_T^2 \leq C h^2 s(\sigma,\sigma). 
\end{eqnarray}
\end{lemma}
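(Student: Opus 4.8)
The plan is to start from the local identity \eqref{June30:008} applied to $v=\Delta_w\sigma$ (which is a legitimate choice since $\Delta_w\sigma|_T\in P_{k-1}(T)\subset M_h$), sum over $T$, and then control the resulting remainder terms $R_T(\sigma,\Delta_w\sigma)$ by the stabilizer seminorm. Concretely, summing \eqref{June30:008} gives
\begin{equation*}
\sum_{T\in\T_h}(\Delta\sigma_0,\Delta_w\sigma)_T = \sum_{T\in\T_h}\|\Delta_w\sigma\|_T^2 - \sum_{T\in\T_h}R_T(\sigma,\Delta_w\sigma),
\end{equation*}
so $\sum_T(\Delta\sigma_0,\Delta_w\sigma)_T$ is bounded by $\|\Delta_w\sigma\|_0^2$ plus a term involving $R_T$. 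The key geometric input is the boundary structure of $R_T(\sigma,\Delta_w\sigma)=\langle\sigma_0-\sigma_b,\nabla(\Delta_w\sigma)\cdot\bn\rangle_{\partial T}-\langle\nabla\sigma_0\cdot\bn-\sigma_n,\Delta_w\sigma\rangle_{\partial T}$: the first factor in each pairing is exactly (a piece of) what enters $s_T(\sigma,\sigma)$, while the second factor is a polynomial on $T$ to which the inverse/trace inequality $\|\psi\|_{\partial T}^2\lesssim h_T^{-1}\|\psi\|_T^2$ from \eqref{tracein} applies. Thus, after inserting $Q_b$ (using that $\nabla(\Delta_w\sigma)\cdot\bn$ is a polynomial so $\langle\sigma_0-\sigma_b,\cdot\rangle=\langle Q_b\sigma_0-\sigma_b,\cdot\rangle$) and applying Cauchy–Schwarz with appropriate powers of $h_T$, one gets
\begin{equation*}
|R_T(\sigma,\Delta_w\sigma)| \lesssim h_T^{3/2}s_T(\sigma,\sigma)^{1/2}\,h_T^{-1}\|\nabla(\Delta_w\sigma)\|_T + h_T^{1/2}s_T(\sigma,\sigma)^{1/2}\,h_T^{-1/2}\|\Delta_w\sigma\|_T,
\end{equation*}
and an inverse inequality $\|\nabla(\Delta_w\sigma)\|_T\lesssim h_T^{-1}\|\Delta_w\sigma\|_T$ turns everything into $s_T(\sigma,\sigma)^{1/2}\|\Delta_w\sigma\|_T$ times $h_T^{1/2}$. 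Summing and using Young's inequality then yields $\sum_T(\Delta\sigma_0,\Delta_w\sigma)_T \le C\|\Delta_w\sigma\|_0^2 + Ch\, s(\sigma,\sigma)^{1/2}\|\Delta_w\sigma\|_0$, hence a bound of the form $\|\Delta_w\sigma\|_0^2 \lesssim \|\text{(something)}\|$ — but this is not yet what we want.

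The real point is that the left-hand side $\sum_T(\Delta\sigma_0,\Delta_w\sigma)_T$ must be connected to $\sum_T\|\nabla\sigma_0\|_T^2$. For this I would instead take $v=-\sigma_0$... no: $\sigma_0\notin M_h$ in general since $\sigma_0\in P_k(T)$ while $M_h$ uses degree $k-1$. So the cleaner route is: apply \eqref{June30:008} with the test function being $\Delta_w\sigma$ \emph{and} separately integrate by parts on $\sum_T(\Delta\sigma_0,\Delta_w\sigma)_T$ to expose $\nabla\sigma_0$. Actually the most direct approach is to integrate $\sum_T(\Delta\sigma_0,\sigma_0)_T$ by parts: $\sum_T(\Delta\sigma_0,\sigma_0)_T = -\sum_T\|\nabla\sigma_0\|_T^2 + \sum_T\langle\nabla\sigma_0\cdot\bn,\sigma_0\rangle_{\partial T}$. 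Then I would combine this with \eqref{June30:008} using test function $v=\CQ_h^{k-1}\sigma_0$ (the $L^2$-projection of $\sigma_0$ into $M_h$, which \emph{is} admissible), and use that $\|\sigma_0-\CQ_h^{k-1}\sigma_0\|_T$ is small — but $\sigma_0$ is a polynomial, not something with known regularity, so this introduces $h_T\|\nabla\sigma_0\|_T$-type terms via a Poincaré/Bramble–Hilbert estimate on the polynomial $\sigma_0$. Chasing this through, the cross terms produce $\sum_T\langle\nabla\sigma_0\cdot\bn,\sigma_0\rangle_{\partial T}$ on element boundaries, which on $\partial T$ I rewrite using $\sigma_b$ and $\sigma_n$ (legitimate because $\nabla\sigma_0\cdot\bn$ and $\sigma_0$ restricted to $\partial T$, after projecting, relate to $\sigma_n,\sigma_b$ up to stabilizer terms), and then the interelement jumps of $\sigma_b,\sigma_n$ cancel because they are single-valued across $\E_h^0$ and vanish on $\Gamma$ (here $\sigma\in W_h^0$ is used). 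What survives is exactly $-\sum_T\|\nabla\sigma_0\|_T^2$ plus controllable errors, giving
\begin{equation*}
\sum_T\|\nabla\sigma_0\|_T^2 \le C\|\Delta_w\sigma\|_0^2 + Ch^2 s(\sigma,\sigma),
\end{equation*}
which is \eqref{June30:001}; and \eqref{June30:002} is immediate by setting $\Delta_w\sigma=0$.

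\textbf{Main obstacle.} The crux is the bookkeeping that shows the boundary sum $\sum_T\langle\nabla\sigma_0\cdot\bn,\sigma_0\rangle_{\partial T}$ — which after integration by parts would naively wreck the estimate — collapses, up to stabilizer-controlled errors of order $h^2 s(\sigma,\sigma)$, into something harmless. This requires carefully: (i) replacing $\sigma_0$ by $\sigma_b$ and $\nabla\sigma_0\cdot\bn$ by $\sigma_n$ on $\partial T$ at the cost of $h_T^{-3/2}\|Q_b\sigma_0-\sigma_b\|_{\partial T}$ and $h_T^{-1/2}\|\nabla\sigma_0\cdot\bn-\sigma_n\|_{\partial T}$ factors (i.e., $s_T^{1/2}$ with the right $h_T$ powers), (ii) using single-valuedness of $\sigma_b,\sigma_n$ on interior faces and the boundary condition $\sigma_b=0$ on $\Gamma$ to kill the now-consolidated face integrals $\sum_{e\in\E_h}\langle\sigma_n,\sigma_b\rangle_e$, and (iii) matching powers of $h_T$ so that every error term genuinely carries the factor $h^2$ (or $h$) that multiplies $s(\sigma,\sigma)$, which forces one to spend an inverse inequality on $\nabla(\Delta_w\sigma)$ and to track where the $L^2$-projection $\CQ_h^{k-1}$ loses or preserves a power of $h$. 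Getting all these $h_T$-exponents to line up — rather than, say, producing an uncontrolled $s(\sigma,\sigma)$ with no $h^2$ in front — is the delicate part; the rest is routine Cauchy–Schwarz, Young's inequality, and trace/inverse inequalities from \eqref{tracein}.
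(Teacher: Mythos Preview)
Your second approach --- testing \eqref{June30:008} with $v=\CQ_h^{k-1}\sigma_0$, using $(\Delta\sigma_0,\CQ_h^{k-1}\sigma_0)_T=(\Delta\sigma_0,\sigma_0)_T$ (exact, since $\Delta\sigma_0\in P_{k-2}$), integrating by parts, and then rewriting $\sum_T\langle\nabla\sigma_0\cdot\bn,\sigma_0\rangle_{\pT}$ via $\sigma_b,\sigma_n$ so that single-valuedness and the boundary condition $\sigma_b|_\Gamma=0$ kill the dangerous pieces --- is exactly the paper's proof. The boundary bookkeeping you outline in (i)--(ii) matches the paper's terms $I_1,I_2,I_3$.

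There is, however, one ingredient you have not named. After the identity
\[
\sum_T\|\nabla\sigma_0\|_T^2 \;=\; I_1+I_2+I_3 \;-\; \sum_T(\Delta_w\sigma,\CQ_h^{k-1}\sigma_0)_T,
\]
the last term is bounded by $\|\Delta_w\sigma\|_0\,\|\sigma_0\|_0$, and this is \emph{not} automatically $\lesssim\|\Delta_w\sigma\|_0^2$: you must still control $\|\sigma_0\|_0$. The paper invokes a discrete Poincar\'e inequality for $\sigma\in W_h^0$,
\[
\|\sigma_0\|_0 \ \le\ C\Big(\|\nabla\sigma_0\|_0 + \big(\textstyle\sum_T h_T^{-1}\|Q_b\sigma_0-\sigma_b\|_{\pT}^2\big)^{1/2}\Big),
\]
which turns $\|\Delta_w\sigma\|_0\,\|\sigma_0\|_0$ into $\|\Delta_w\sigma\|_0\,\|\nabla\sigma_0\|_0$ (absorbed by Young into the left side plus $C\|\Delta_w\sigma\|_0^2$) and $\|\Delta_w\sigma\|_0\cdot h\,s(\sigma,\sigma)^{1/2}$ (bounded by $\|\Delta_w\sigma\|_0^2+h^2s(\sigma,\sigma)$). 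Without this step the argument does not close. Your reference to a ``Poincar\'e/Bramble--Hilbert estimate'' concerns the local projection error $\|\sigma_0-\CQ_h^{k-1}\sigma_0\|_T\lesssim h_T\|\nabla\sigma_0\|_T$, which the paper uses for $I_1$ but which is a different inequality from the global one needed here. The inverse inequality on $\nabla(\Delta_w\sigma)$ you mention in (iii) is a remnant of your abandoned first approach and plays no role in the actual proof.
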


\begin{proof}
From \eqref{June30:008} we have
$$
(\Delta_w \sigma, v)_T = (\Delta\sigma_0, v)_T +R_T(\sigma, v),\qquad \forall v\in M_h.
$$
By letting $v=-\mathcal{Q}_h^{k-1} \sigma_0$ we arrive at
\begin{eqnarray}
-(\Delta_w \sigma, \mathcal{Q}_h^{k-1} \sigma_0)_T &=& -(\Delta\sigma_0, \mathcal{Q}_h^{k-1} \sigma_0)_T - R_T(\sigma, \mathcal{Q}_h^{k-1} \sigma_0)\nonumber \\
&=& -(\Delta\sigma_0, \sigma_0)_T - R_T(\sigma, \mathcal{Q}_h^{k-1} \sigma_0)\label{June30:010}\\
&=& (\nabla\sigma_0, \nabla\sigma_0)_T-\langle\nabla\sigma_0\cdot\bn, \sigma_0\rangle_\pT - R_T(\sigma, \mathcal{Q}_h^{k-1} \sigma_0).\nonumber
\end{eqnarray}
Note that
\begin{eqnarray*}
\sum_{T\in\T_h} \langle\nabla\sigma_0\cdot\bn, \sigma_0\rangle_\pT = \sum_{T\in\T_h} \langle\nabla\sigma_0\cdot\bn - \sigma_n, \sigma_0\rangle_\pT + \langle \sigma_n, \sigma_0-\sigma_b\rangle_\pT.
\end{eqnarray*}
Thus,
\begin{eqnarray*}
&&\sum_{T\in\T_h} \langle\nabla\sigma_0\cdot\bn, \sigma_0\rangle_\pT + R_T(\sigma, \mathcal{Q}_h^{k-1} \sigma_0)\\
&=& \sum_{T\in\T_h} \langle\nabla\sigma_0\cdot\bn - \sigma_n, \sigma_0\rangle_\pT + \langle \sigma_n, \sigma_0-\sigma_b\rangle_\pT\\
&& +\langle \sigma_0-\sigma_b, \nabla \mathcal{Q}_h^{k-1} \sigma_0\cdot
\bm{n}\rangle_{\partial T}-\langle  \nabla \sigma_0\cdot \bn-\sigma_n, \mathcal{Q}_h^{k-1} \sigma_0 \rangle_{\partial T}\\
&=& \sum_{T\in\T_h} \langle\nabla\sigma_0\cdot\bn - \sigma_n, \sigma_0-\mathcal{Q}_h^{k-1} \sigma_0\rangle_\pT \\
&& + \sum_{T\in\T_h} \langle \sigma_n -\nabla\sigma_0\cdot\bn, \sigma_0-\sigma_b\rangle_\pT\\
&& +\sum_{T\in\T_h}\langle \sigma_0-\sigma_b, \nabla\sigma_0\cdot
\bm{n} + \nabla \mathcal{Q}_h^{k-1} \sigma_0\cdot
\bm{n}\rangle_{\partial T}\\
&=& I_1 + I_2 + I_3,
\end{eqnarray*}
where each $I_j$ stands for the corresponding term with $j=1,2,3$. Substituting the above identity into \eqref{June30:010} yields
\begin{equation}\label{June30:011}
    \sum_{T\in\T_h}(\nabla\sigma_0, \nabla\sigma_0)_T = I_1+I_2+I_3 - \sum_{T\in\T_h}(\Delta_w \sigma, \mathcal{Q}_h^{k-1} \sigma_0)_T.
\end{equation}

The term $I_1$ can be estimated by using the Cauchy-Schwarz inequality and the trace inequality as follows:
\begin{eqnarray}\label{June30:012}
    |I_1|&=& \left| \sum_{T\in\T_h} \langle\nabla\sigma_0\cdot\bn - \sigma_n, \sigma_0-\mathcal{Q}_h^{k-1} \sigma_0\rangle_\pT\right|\\
    &\le& C \left(\sum_{T\in\T_h} h_T \|\nabla\sigma_0\cdot\bn - \sigma_n\|_\pT^2\right)^{\frac12} \|\nabla \sigma_0\|_0.\nonumber
\end{eqnarray}
The term $I_2$ can be handled by using the Cauchy-Schwarz inequality:
\begin{eqnarray}
    |I_2|&=& 
\left|\sum_{T\in\T_h} \langle \sigma_n -\nabla\sigma_0\cdot\bn, \sigma_0-\sigma_b\rangle_\pT\right|\nonumber\\
&=&\left|\sum_{T\in\T_h} \langle \sigma_n -\nabla\sigma_0\cdot\bn, Q_b\sigma_0-\sigma_b\rangle_\pT\right|\label{June30:014}\\
&\le &\left(\sum_{T\in\T_h} h_T \|\nabla\sigma_0\cdot\bn - \sigma_n\|_\pT^2\right)^{\frac12} \left(\sum_{T\in\T_h} h_T^{-1} \|Q_b\sigma_0 - \sigma_b\|_\pT^2\right)^{\frac12}.\nonumber
\end{eqnarray}
The term $I_3$ can be estimted in a similar fashion as follows:
\begin{eqnarray}\nonumber
    |I_3|&=& \left| \sum_{T\in\T_h}\langle \sigma_0-\sigma_b, \nabla\sigma_0\cdot
\bm{n} + \nabla \mathcal{Q}_h^{k-1} \sigma_0\cdot
\bm{n}\rangle_{\partial T}\right|\\
&=& \left| \sum_{T\in\T_h}\langle Q_b\sigma_0-\sigma_b, \nabla\sigma_0\cdot
\bm{n} + \nabla \mathcal{Q}_h^{k-1} \sigma_0\cdot
\bm{n}\rangle_{\partial T}\right|\label{June30:015}\\
&\le& C \left(\sum_{T\in\T_h} h_T^{-1} \|Q_b\sigma_0 - \sigma_b\|_\pT^2\right)^{\frac12} \|\nabla \sigma_0\|_0.\nonumber
\end{eqnarray}
The last term on the right-hand side of \eqref{June30:011} can be estimated by using the discrete Poincare inequality:
\begin{eqnarray}\label{June30:016}
&&\left| \sum_{T\in\T_h}(\Delta_w \sigma, \mathcal{Q}_h^{k-1} \sigma_0)_T\right| \\
&\leq& \| \Delta_w \sigma\| \ \| \mathcal{Q}_h^{k-1} \sigma_0\| \nonumber\\
&\leq& \| \Delta_w \sigma\| \ \| \sigma_0\| \nonumber\\
&\le & C \| \Delta_w \sigma\| \left( \|\nabla\sigma_0\| + \left(\sum_{T\in\T_h} h_T^{-1} \|Q_b\sigma_0 - \sigma_b\|_\pT^2\right)^{\frac12}\right)\nonumber
\end{eqnarray}
Finally, combining \eqref{June30:011} with \eqref{June30:012}-\eqref{June30:016} yields the desire inequality \eqref{June30:001}.
\end{proof}

\section{Error Estimates}\label{Section:Stability}
In this section, we shall demonstrate the optimal order of error estimates for the PDWG scheme (\ref{32})-(\ref{2}).

%  We introduce a semi-norm for the finite element space $M_h$; i.e.,
%   \begin{equation}\label{EQ:triplebarnorm2}
%   \3bar v\3bar=\Big(\sum_{T\in \mathcal{T}_h}h_T^2\|\mathcal{L} v\|^2_T \Big)^{\frac{1}{2}}, \  \forall v\in M_h. 
%   \end{equation} 
% %  where $[[v]]=v|_{\pT_1}-v|_{\pT_2}$ is the jump of $v$ on the interior edge $e=T_1 \cap T_2 \in {\cal E}_h^0$ and $[[v]]=v$ on the boundary edge $e\subset \partial \Omega$. The same rule applies to $[[a \nabla v \cdot\bn]]$.

%   \begin{lemma}\label{Lemma:8.1}\cite{wy3655}
%   Let $k\geq 2$ and $\mathcal{T}_h$ be a shape regular partition of the domain $\Omega$ specified in \cite{wy3655}. For $0\leq t \leq 2$, the following estimates hold true:
%   \begin{eqnarray} 
%   & \sum_{T\in \mathcal{T}_h}h_T^{2t}\|u-Q_0u\|^2_{t,T} \lesssim
%  h^{2(m+1)}\|u\|^2_{m+1},&\qquad m\in [t-1,k],\\
%  \label{error2}
%   & \sum_{T\in \mathcal{T}_h}h_T^{2t}\|u-{\cal
%  Q}^{k-1}_hu\|^2_{t,T}  \lesssim h^{2m}\|u\|^2_{m},&\qquad m \in [t, s+1].\label{term3}
%   \end{eqnarray}
%   \end{lemma}

\begin{theorem} \label{theoestimate} 
 Let $k\ge 1$. Let $u$ be the exact solution of the Poisson equation (\ref{model}) and $(u_h, \lambda_h) \in M_{h} \times W_{h}^0$ be the numerical solution arising from PDWG method (\ref{32})-(\ref{2}). Assume the exact solution $u$ is sufficiently regular such that $u\in H^{k}(\Omega)$. The following error estimate holds true; i.e.,
 \begin{equation}\label{erres}
\3bar \varepsilon_h \3bar+\|e_h\|
 \leq Ch^{k}\|u\|_{k}.
\end{equation}
\end{theorem}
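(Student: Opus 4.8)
The plan is to use the error equations of Lemma~\ref{errorequa} together with the inf-sup condition of Lemma~\ref{lem3-new} and the energy-type argument that already appears in the uniqueness proof. First, I would take $\sigma = \varepsilon_h$ in \eqref{sehv} and $v = e_h$ in \eqref{sehv2} and subtract, obtaining $s(\varepsilon_h,\varepsilon_h) = \ell_u(\varepsilon_h)$, i.e. $\3bar \varepsilon_h \3bar^2 = \ell_u(\varepsilon_h)$. The next task is to bound the right-hand side functional $\ell_u(\sigma)$ from \eqref{lu} by $C h^k \|u\|_k \3bar\sigma\3bar$. Each of the two boundary terms in $\ell_u(\sigma)$ pairs a projection-error factor $\nabla(u-\mathcal{Q}_h^{k-1}u)$ or $u-\mathcal{Q}_h^{k-1}u$ against $\sigma_0-\sigma_b$ (which, since $\sigma_0-\sigma_b$ can be replaced by $Q_b\sigma_0-\sigma_b$ on $\partial T$) or against $\nabla\sigma_0\cdot\bn-\sigma_n$. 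Applying Cauchy--Schwarz on $\partial T$ and inserting the appropriate powers of $h_T$ so that the $\sigma$-factors reassemble into $s(\sigma,\sigma)^{1/2} = \3bar\sigma\3bar$, it remains to control $\big(\sum_T h_T^{3}\|\nabla(u-\mathcal{Q}_h^{k-1}u)\cdot\bn\|_{\partial T}^2\big)^{1/2}$ and $\big(\sum_T h_T\|u-\mathcal{Q}_h^{k-1}u\|_{\partial T}^2\big)^{1/2}$; by the trace inequality \eqref{tracein} and the standard approximation properties of the $L^2$ projection $\mathcal{Q}_h^{k-1}$ onto $P_{k-1}$, each of these is $\lesssim h^k\|u\|_k$. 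This yields $\3bar\varepsilon_h\3bar \le C h^k\|u\|_k$.

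For the $\|e_h\|$ bound, I would invoke the inf-sup condition: given $e_h \in M_h$ there is $\rho \in W_h^0$ with $b(e_h,\rho) = \|e_h\|^2$ and $\3bar\rho\3bar \le C\|e_h\|$. Plugging $\sigma = \rho$ into \eqref{sehv} gives $\|e_h\|^2 = b(e_h,\rho) = \ell_u(\rho) - s(\varepsilon_h,\rho)$. The second term is bounded by $\3bar\varepsilon_h\3bar\,\3bar\rho\3bar \le C h^k\|u\|_k\,\|e_h\|$ using Cauchy--Schwarz for the $s(\cdot,\cdot)$ form and the already-established estimate on $\3bar\varepsilon_h\3bar$; the first term is bounded by $C h^k\|u\|_k\,\3bar\rho\3bar \le C h^k\|u\|_k\,\|e_h\|$ by the same estimate on $\ell_u$ derived above. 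Dividing through by $\|e_h\|$ gives $\|e_h\| \le C h^k\|u\|_k$, and combining the two bounds finishes the proof.

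The main obstacle I anticipate is the careful bookkeeping in estimating $\ell_u(\sigma)$: one must match each factor of $\sigma$ with exactly the power of $h_T$ dictated by the definition of $s_T(\cdot,\cdot)$ (namely $h_T^{-3}$ for $Q_b\sigma_0-\sigma_b$ and $h_T^{-1}$ for $\nabla\sigma_0\cdot\bn-\sigma_n$), and then verify that the complementary $h_T$-weighted trace norms of the projection errors indeed decay like $h^k$. Concretely, $\|u-\mathcal{Q}_h^{k-1}u\|_{\partial T} \lesssim h_T^{-1/2}\|u-\mathcal{Q}_h^{k-1}u\|_T + h_T^{1/2}\|\nabla(u-\mathcal{Q}_h^{k-1}u)\|_T \lesssim h_T^{k-1/2}\|u\|_{k,T}$ and similarly $\|\nabla(u-\mathcal{Q}_h^{k-1}u)\cdot\bn\|_{\partial T} \lesssim h_T^{k-3/2}\|u\|_{k,T}$, so that $\sum_T h_T\|u-\mathcal{Q}_h^{k-1}u\|_{\partial T}^2 \lesssim h^{2k}\|u\|_k^2$ and $\sum_T h_T^3\|\nabla(u-\mathcal{Q}_h^{k-1}u)\cdot\bn\|_{\partial T}^2 \lesssim h^{2k}\|u\|_k^2$; this is where the regularity assumption $u\in H^k(\Omega)$ and the shape-regularity of $\mathcal{T}_h$ enter. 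Everything else is a routine combination of Cauchy--Schwarz, the trace inequalities \eqref{tracein}, and the boundedness of the $L^2$ projections.
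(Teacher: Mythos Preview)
Your overall strategy mirrors the paper's, but there is a genuine gap in your estimate of $\ell_u(\sigma)$. You assert parenthetically that ``$\sigma_0-\sigma_b$ can be replaced by $Q_b\sigma_0-\sigma_b$ on $\partial T$,'' and this is not correct here: such a replacement is valid only when the other factor in the pairing lies in $P_{k-1}(e)$, but $\nabla(u-\mathcal{Q}_h^{k-1}u)\cdot\bn$ contains $\nabla u\cdot\bn$, which is not a polynomial. Writing $\sigma_0-\sigma_b=(Q_b\sigma_0-\sigma_b)+(I-Q_b)\sigma_0$ leaves a residual term
\[
\sum_{T\in\mathcal{T}_h}\langle (I-Q_b)\sigma_0,\ \nabla(u-\mathcal{Q}_h^{k-1}u)\cdot\bn\rangle_{\partial T}
=\sum_{T\in\mathcal{T}_h}\langle (I-Q_b)\sigma_0,\ (I-Q_b)\nabla u\cdot\bn\rangle_{\partial T},
\]
and $\|(I-Q_b)\sigma_0\|_{\partial T}$ is \emph{not} controlled by $\3bar\sigma\3bar$ for general $\sigma\in W_h^0$, so the claimed bound $|\ell_u(\sigma)|\le Ch^k\|u\|_k\,\3bar\sigma\3bar$ fails in that generality. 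In the paper this is precisely where Lemma~\ref{weakly-harmonic} enters: since $\varepsilon_h$ is weakly harmonic by \eqref{sehv2}, the estimate \eqref{June30:002} gives $\sum_T\|\nabla\varepsilon_0\|_T^2\le Ch^2\,s(\varepsilon_h,\varepsilon_h)$, and after trace and polynomial approximation ($\|(I-Q_b)\varepsilon_0\|_{\partial T}\lesssim h_T^{-1/2}\|(I-\mathcal{Q}_h^{k-1})\varepsilon_0\|_T\lesssim h_T^{1/2}\|\nabla\varepsilon_0\|_T$) this supplies the missing factor of $h$; see \eqref{EQ:J3p5}.

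The same issue recurs when you estimate $|\ell_u(\rho)|$ for the inf-sup test function $\rho$, which is not weakly harmonic; there one has to exploit the explicit construction $\rho=Q_hw$ from Lemma~\ref{lem3-new} together with the $H^2$-regularity \eqref{dual:regularity_2} of $w$ to control the extra term. Apart from this missing ingredient, your outline (energy identity for $\varepsilon_h$, then inf-sup for $e_h$) and your bookkeeping of the $h_T$-weights for the two ``good'' terms are correct and coincide with the paper's argument.
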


\begin{proof}
Letting $\sigma=\varepsilon_h=\{\varepsilon_0,\varepsilon_b,\varepsilon_n\}$ in
the error equation (\ref{sehv}) and using (\ref{sehv2}) and \eqref{lu} we arrive at
\begin{equation}\label{EQ:April7:001}
\begin{split}
&s(\varepsilon_h, \varepsilon_h) = \ell_u(\varepsilon_h)\\
   =&\sum_{T\in \mathcal{T}_h} \langle \varepsilon_0-\varepsilon_b, \nabla ( u-\mathcal{Q}_h^{k-1}u) \cdot
\bm{n}\rangle_{\partial T}  +\langle \varepsilon_n -  \nabla \varepsilon_0\cdot \bn, u-\mathcal{Q}_h^{k-1}u\rangle_{\partial T}\\
=&\sum_{T\in \mathcal{T}_h} \langle Q_b\varepsilon_0-\varepsilon_b, \nabla ( u-\mathcal{Q}_h^{k-1}u) \cdot
\bm{n}\rangle_{\partial T}  +\langle \varepsilon_n -  \nabla \varepsilon_0\cdot \bn, u-\mathcal{Q}_h^{k-1}u\rangle_{\partial T}\\
& + \sum_{T\in \mathcal{T}_h} \langle (I-Q_b)\varepsilon_0, \nabla ( u-\mathcal{Q}_h^{k-1}u) \cdot
\bm{n}\rangle_{\partial T}.
\end{split}
\end{equation}

Using Cauchy-Schwarz inequality, the trace inequality \eqref{tracein} gives
\begin{equation}\label{EQ:J2}
\begin{split} 
 & \left|\sum_{T\in \mathcal{T}_h} \langle Q_b\varepsilon_0-\varepsilon_b, \nabla ( u-\mathcal{Q}_h^{k-1}u) \cdot
\bm{n}\rangle_{\partial T}\right|\\
 \leq & \Big(\sum_{T\in \mathcal{T}_h}h_T^{-3}\|Q_b\varepsilon_0-\varepsilon_b\|_\pT^2\Big)^{\frac{1}{2}} \Big(\sum_{T\in \mathcal{T}_h} h_T^3\| \nabla ( u-\mathcal{Q}_h^{k-1}u)\|_\pT^2\Big)^{\frac{1}{2}} \\
\leq & C\Big(\sum_{T\in \mathcal{T}_h} h_T^{2}\| u-\mathcal{Q}_h^{k-1}u\|^2_{1,T} + h_T^{4}\|u-\mathcal{Q}_h^{k-1}u\|_{2, T}^2\Big)^{\frac{1}{2}} \\
& \;\ \Big(\sum_{T\in \mathcal{T}_h}h_T^{-3}\|Q_b\varepsilon_0-\varepsilon_b\|_\pT^2\Big)^{\frac{1}{2}} \\
\leq &\ Ch^{k}\|u\|_{k
} s(\varepsilon_h, \varepsilon_h)^{\frac{1}{2}}.
\end{split}
\end{equation} 

Again, from the Cauchy-Schwarz inequality and the trace inequality \eqref{tracein} we obtain
\begin{equation}\label{EQ:J3}
\begin{split}
 &\left| \sum_{T\in \mathcal{T}_h}\langle \varepsilon_n - \nabla \varepsilon_0\cdot \bn, u-\mathcal{Q}_h^{k-1}u\rangle_{\partial T}\right|\\
\leq &\Big(\sum_{T\in \mathcal{T}_h} h_T^{-1}\|\varepsilon_n- \nabla \varepsilon_0\cdot \bn \|_\pT^2\Big)^{\frac{1}{2}} \Big(\sum_{T\in \mathcal{T}_h} h_T \|u-\mathcal{Q}_h^{k-1}u\|_\pT^2\Big)^{\frac{1}{2}}\\
\leq &C\Big(\sum_{T\in \mathcal{T}_h}h_T^{-1}\|\varepsilon_n- \nabla \varepsilon_0\cdot \bn \|_\pT^2\Big)^{\frac{1}{2}} \\
&\quad \cdot \Big(\sum_{T\in \mathcal{T}_h} \|u-\mathcal{Q}_h^{k-1}u\|_T^2 + h_T^2 \|\nabla(u-\mathcal{Q}_h^{k-1}u)\|_{T}^2 \Big)^{\frac{1}{2}}\\
\leq &\  Ch^{k}\|u\|_{k} s(\varepsilon_h, \varepsilon_h)^{\frac{1}{2}}.
\end{split}
\end{equation} 

Using the Cauchy-Schwarz inequality, the trace inequality \eqref{tracein}, and the estimate \eqref{June30:002} we arrive at

\begin{equation}\label{EQ:J3p5}
\begin{split}
&\left|\sum_{T\in \mathcal{T}_h} \langle (I-Q_b)\varepsilon_0, \nabla ( u-\mathcal{Q}_h^{k-1}u) \cdot
\bm{n}\rangle_{\partial T}\right|\\
=& \left|\sum_{T\in \mathcal{T}_h} \langle (I-Q_b)\varepsilon_0, (I-Q_b)\nabla u \cdot
\bm{n}\rangle_{\partial T}\right|\\
\leq & C\Big(\sum_{T\in \mathcal{T}_h} \| (I-\mathcal{Q}_h^{k-1})\nabla u\|^2_{T} + h_T^{2}\|(I-\mathcal{Q}_h^{k-1})\nabla u\|_{1, T}^2\Big)^{\frac{1}{2}} \\
& \ \cdot \Big(\sum_{T\in \mathcal{T}_h}h_T^{-1}\|(I-Q_b)\varepsilon_0\|_\pT^2\Big)^{\frac{1}{2}}\\
\le& Ch^{k-1}\|u\|_k \ h^{-1} \|(I-\mathcal{Q}_h^{k-1})\varepsilon_0\|\\
\le& C h^{k-1}\|u\|_k \|\nabla\varepsilon_0\|_0\\
\le& C h^k \|u\|_k s(\varepsilon_h,\varepsilon_h)^{\frac12}.
\end{split}
\end{equation}

Substituting the estimates \eqref{EQ:J2}-\eqref{EQ:J3p5} into  \eqref{EQ:April7:001} yields
\begin{equation}\label{aij}
s(\varepsilon_h, \varepsilon_h)  = |\ell_u(\varepsilon_h)|\leq Ch^{k}\|u\|_{k}s(\varepsilon_h, \varepsilon_h)^{\frac{1}{2}} ,
\end{equation} 
which leads to
\begin{equation}\label{EQ:April7:002}
s(\varepsilon_h, \varepsilon_h)^{\frac{1}{2}}  \leq C h^{k}\|u\|_{k}.
\end{equation}

Next, for the error function $e_h=u_h-{\cal Q}_h^{k-1}u$, from the inf-sup condition \eqref{EQ:inf-sup-condition-01}, there exists a $\rho\in W_h^0$ such that
\begin{equation}\label{inf}
    b(e_h, \rho)=\|e_h\|^2, \qquad \3bar \rho \3bar\leq C\|e_h\|.
\end{equation}
From the error equation \eqref{sehv} we have
$$
b(e_h, \rho)=\ell_u (\rho)-s(\varepsilon_h, \rho).
$$
Using the Cauchy-Schwarz inequality, the triangle inequality, \eqref{aij} and \eqref{EQ:April7:002}   that
\begin{equation*}
    \begin{split}
        |b(e_h, \rho)|\leq |\ell_u (\rho)| + \3bar\varepsilon_h\3bar \3bar \rho\3bar\leq Ch^{k}\|u\|_{k}\3bar \rho\3bar,
    \end{split}
\end{equation*}
which, combined with \eqref{inf}, gives
$$
\|e_h\|\leq Ch^{k}\|u\|_{k}. 
$$
The above equation, together with the error estimate \eqref{EQ:April7:002}, completes the proof of the theorem.
\end{proof}

 \section{Error Estimates for the Dual Variable}\label{Section:ES-2}
In this section  we shall establish some error estimates for the approximate dual variable $\lambda_h$
  in the $L^2$ norm. To this end, let $\varphi$ be the solution of the following auxiliary problem
  \begin{eqnarray}\label{dual-problem:new_1}
  \begin{aligned}
    \Delta \varphi   &=\theta, \ \ \mbox{ in} \ \Omega, \\
   \varphi&=0,\ \  \mbox{ on} \ \partial\Omega,
  \end{aligned}
  \end{eqnarray}
  where $\theta$ is a given function in $L^2(\Omega)$. Assume the dual problem (\ref{dual-problem:new_1}) has the $H^{2}$-regularity in the sense that there exists a constant $C$ such that
  \begin{equation}\label{regularity:1}
 \|\varphi\|_{2}\leq C \|\theta\| .
 \end{equation}
 From \eqref{disvergence} and the usual integration by parts we have for any $v\in W_h^0$
 \begin{equation*}%\label{e:11}
  \begin{split}
   & (\Delta_w v, \varphi)  \\
  = & \sum_{T\in\T_h} (\Delta_w v, {\cal Q}^{k-1}_h\varphi)_T\\ % = & 
  %\sum_{T\in\T_h} (v_0, \Delta{\cal Q}^{k-1}_h\varphi)_T-\langle v_b,  \nabla {\cal Q}^{k-1}_h\varphi\cdot
%\bm{n}\rangle_{\partial T}+ \langle v_n,{\cal Q}^{k-1}_h\varphi\rangle_{\partial T}\\
  = & \sum_{T\in\T_h} (\Delta v_0, {\cal Q}^{k-1}_h\varphi)_T+\langle Q_b v_0-v_b,   \nabla {\cal Q}^{k-1}_h\varphi\cdot
\bm{n}\rangle_{\partial T}+ \langle v_n- \nabla v_0\cdot\bn,{\cal Q}^{k-1}_h\varphi\rangle_{\partial T}\\
  = & \sum_{T\in\T_h} (v_0,\Delta\varphi)_T+\langle Q_bv_0-v_b, \nabla ( {\cal Q}^{k-1}_h\varphi-\varphi)\cdot
\bm{n}\rangle_{\partial T}+ \langle v_n- \nabla v_0\cdot\bn,{\cal Q}^{k-1}_h\varphi-\varphi\rangle_{\partial T},\\
  \end{split}
  \end{equation*}
where we used $\sum_{T\in {\cal T}_h} \langle v_b, \nabla\varphi \cdot\bn\rangle_{\partial T}= \langle v_b, \nabla \varphi\cdot\bn\rangle_{\partial \Omega}=0$ due to $v_b=0$ on $\partial\Omega$ and $\sum_{T\in {\cal T}_h} \langle v_n, \varphi\rangle_{\partial T}= \langle v_n,\varphi\rangle_{\partial \Omega}=0$ due to $\varphi=0$ on $\partial\Omega$. Thus, we have 
  \begin{equation}\label{e:11}
  \begin{split}
   &(v_0, \Delta\varphi)=   (\Delta_w v, \varphi)\\
   &- \sum_{T\in\T_h}\Big(\langle  Q_bv_0-v_b, \nabla ( {\cal Q}^{k-1}_h\varphi-\varphi)\cdot
\bm{n}\rangle_{\partial T}+ \langle v_n- \nabla v_0\cdot\bn,{\cal Q}^{k-1}_h\varphi-\varphi\rangle_{\partial T}\Big).
  \end{split}
  \end{equation}
% Analogously, from \eqref{Gradient-2} we have (note that $s=k-1$ so that $\nabla v_0\in [P_s(T)]^d$)
% % \begin{equation}\label{e:11:02}
% % \begin{split}
% % (v_0, \bbeta\cdot\nabla\varphi)_T&= - (\nabla v_0, \bbeta\varphi)_T +\langle v_0, \varphi \bbeta\cdot\bn\rangle_\pT \\
% % &=- (\nabla v_0, \bbeta{\cal Q}_h\varphi)_T +\langle v_0, \varphi \bbeta\cdot\bn\rangle_\pT\\
% % &=- (\bbeta\cdot\nabla_w v, \varphi)_T +\langle v_b-v_0, {\cal Q}_h\varphi\bbeta\cdot\bn\rangle_\pT
% % +\langle v_0, \varphi \bbeta\cdot\bn\rangle_\pT.\\
% % \end{split}
% % \end{equation}
% % Summing \eqref{e:11:02} over all $T\in\T_h$ yields
% % \begin{equation}\label{e:11:05}
% % \begin{split}
% % (v_0, \bbeta\cdot\nabla\varphi)
% % =&- (\bbeta\cdot\nabla_w v, \varphi)+ \sum_{T\in\T_h}\langle v_0-v_b, (I-{\cal Q}_h)\varphi\bbeta\cdot\bn\rangle_\pT.
% % \end{split}
% % \end{equation}

  We have the following error estimates for the variable $\lambda_0$.

  \begin{theorem}\label{theo:1}
  Let $u$ and $(u_h;\lambda_h) \in M_h\times W_h^0$ be the  solutions of  \eqref{model} and (\ref{32})-(\ref{2}), respectively. Assume that the dual problem \eqref{dual-problem:new_1}
  has the $H^{2}(\Omega)$ regularity with the a priori estimate \eqref{regularity:1}.
  Then the following estimate holds true:
  \begin{equation}\label{es:2}
   \|\lambda_0\| \le C h^{k+2}\|u\|_{k}.
  \end{equation}
 \end{theorem}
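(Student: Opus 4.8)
The plan is to use a standard duality argument. Let $\varphi$ solve the auxiliary problem \eqref{dual-problem:new_1} with $\theta=\lambda_0$, so that $\|\varphi\|_2\le C\|\lambda_0\|$ by the $H^2$-regularity assumption \eqref{regularity:1}. The key identity is \eqref{e:11} applied to $v=\varepsilon_h=\lambda_h\in W_h^0$, which gives
$$
\|\lambda_0\|^2 = (\lambda_0,\Delta\varphi) = (\Delta_w\varepsilon_h,\varphi) - \sum_{T\in\T_h}\Big(\langle Q_b\varepsilon_0-\varepsilon_b, \nabla({\cal Q}_h^{k-1}\varphi-\varphi)\cdot\bn\rangle_{\partial T} + \langle \varepsilon_n-\nabla\varepsilon_0\cdot\bn, {\cal Q}_h^{k-1}\varphi-\varphi\rangle_{\partial T}\Big).
$$
First I would handle the boundary sums: by Cauchy--Schwarz and the trace inequality \eqref{tracein} applied to ${\cal Q}_h^{k-1}\varphi-\varphi$ (and its gradient), each term is bounded by $C h^2\,s(\varepsilon_h,\varepsilon_h)^{1/2}\,\|\varphi\|_2$, using the stabilizer weights $h_T^{-3}$ and $h_T^{-1}$ against the approximation estimates $\|{\cal Q}_h^{k-1}\varphi-\varphi\|_{\partial T}$ and $\|\nabla({\cal Q}_h^{k-1}\varphi-\varphi)\|_{\partial T}$. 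Since $\varphi\in H^2$ only, I expect the gain here to be exactly $h^2$ (not more), so combined with Theorem \ref{theoestimate}, which gives $s(\varepsilon_h,\varepsilon_h)^{1/2}=\3bar\varepsilon_h\3bar\le C h^k\|u\|_k$, these terms contribute $O(h^{k+2}\|u\|_k\|\lambda_0\|)$.

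The term $(\Delta_w\varepsilon_h,\varphi)$ requires more care. Since $\varepsilon_h$ is weakly harmonic by \eqref{sehv2} (i.e. $(\Delta_w\varepsilon_h,v)_T=0$ for all $v\in M_h$), one has $(\Delta_w\varepsilon_h,{\cal Q}_h^{k-1}\varphi)=0$, so $(\Delta_w\varepsilon_h,\varphi)=(\Delta_w\varepsilon_h,\varphi-{\cal Q}_h^{k-1}\varphi)$. Wait --- $\Delta_w\varepsilon_h\in M_h$ by construction, so actually $(\Delta_w\varepsilon_h,\varphi)=(\Delta_w\varepsilon_h,{\cal Q}_h^{k-1}\varphi)=0$ directly. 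Thus this term vanishes entirely, and the estimate reduces to controlling the two boundary sums. Collecting, $\|\lambda_0\|^2\le C h^{k+2}\|u\|_k\|\lambda_0\|$, which gives \eqref{es:2} after dividing by $\|\lambda_0\|$.

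The main obstacle is being careful with which approximation/trace estimates yield the sharp $h^2$ factor from the two boundary sums: one must pair $h_T^{-3}\|Q_b\varepsilon_0-\varepsilon_b\|_{\partial T}^2$ with $h_T^{3}\|\nabla({\cal Q}_h^{k-1}\varphi-\varphi)\|_{\partial T}^2\lesssim h_T^{4}\|\varphi\|_{2,T}^2$ (using the trace inequality and $\|\nabla({\cal Q}_h^{k-1}\varphi-\varphi)\|_{1,T}\lesssim\|\varphi\|_{2,T}$), and similarly $h_T^{-1}\|\varepsilon_n-\nabla\varepsilon_0\cdot\bn\|_{\partial T}^2$ with $h_T\|{\cal Q}_h^{k-1}\varphi-\varphi\|_{\partial T}^2\lesssim h_T^{4}\|\varphi\|_{2,T}^2$; the $h^2$ then comes out as $(h^4)^{1/2}$ relative to the $s(\cdot,\cdot)^{1/2}$ scaling. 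One also needs to confirm that $\Delta_w\varepsilon_h$ indeed lies in $M_h$ (it does, since $\Delta_w\sigma|_T=\Delta_{w,k-1,T}(\sigma|_T)\in P_{k-1}(T)$) so that the orthogonality $(\Delta_w\varepsilon_h,\varphi)=(\Delta_w\varepsilon_h,{\cal Q}_h^{k-1}\varphi)=0$ is legitimate.
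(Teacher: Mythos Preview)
Your proposal is correct and follows essentially the same approach as the paper: apply the identity \eqref{e:11} with $v=\lambda_h$, note that the term $(\Delta_w\lambda_h,\varphi)$ vanishes because $\Delta_w\lambda_h=0$ (from \eqref{2}), and bound the two boundary sums by $Ch^2 s(\lambda_h,\lambda_h)^{1/2}\|\varphi\|_2$ via Cauchy--Schwarz, the trace inequality, and the approximation properties of ${\cal Q}_h^{k-1}$, then invoke Theorem~\ref{theoestimate} and the regularity \eqref{regularity:1}. The only cosmetic difference is that the paper works with a generic $\theta$ and appeals to duality at the end, whereas you take $\theta=\lambda_0$ from the start; the estimates are identical.
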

  \begin{proof}
  For any given function $\theta\in L^2(\Omega)$, let $\varphi$ be the solution of \eqref{dual-problem:new_1}. From  \eqref{e:11}  we have
   \begin{equation*}
   \begin{split}
   (\theta, \lambda_0) = &(\lambda_0,\Delta\varphi)\\
   =& (\Delta_w \lambda_h, \varphi) \\
   & -\sum_{T\in\T_h}\Big(\langle Q_b\lambda_0-\lambda_b,   \nabla ( {\cal Q}^{k-1}_h\varphi-\varphi)\cdot
\bm{n}\rangle_{\partial T}+ \langle \lambda_n- \nabla \lambda_0\cdot\bn,{\cal Q}^{k-1}_h\varphi-\varphi\rangle_{\partial T}\Big)\\
     =&I_1+I_2+I_3.
  \end{split} 
  \end{equation*}
  We next estimate $I_i,\ 1\le i\le 3$,  respectively.   In light of \eqref{2}, we have $\Delta_w \lambda_h=0$ so that $I_1 = 0$. For the term $I_2$, we have from the Cauchy-Schwarz inequality and the trace inequality    \eqref{tracein}  
\begin{eqnarray*}
 |I_2|&=& | \sum_{T\in\T_h}\langle Q_b\lambda_0-\lambda_b, \nabla ( {\cal Q}^{k-1}_h\varphi-\varphi)\cdot
\bm{n}\rangle_{\partial T}|\\
 &\leq & \left(\sum_{T\in\T_h} h_T^{-3}  \| Q_b\lambda_0-\lambda_b\|^2_{\pT}\right)^{\frac12} \left( \sum_{T\in\T_h} h_T ^3 \|  \nabla ( {\cal Q}^{k-1}_h\varphi-\varphi)\cdot
\bm{n}\|^2_{\pT}\right)^{\frac12}\\
 &\leq & Cs(\lambda_h,\lambda_h)^{\frac 12}(\sum_{T\in\T_h}h_T ^2 \| \nabla ( {\cal Q}^{k-1}_h\varphi-\varphi)\cdot
\bm{n}\|^2_T+h_T^4 \|\nabla( \nabla ( {\cal Q}^{k-1}_h\varphi-\varphi)\cdot
\bm{n})\|^2_T)^{\frac12}
  \\&\leq & Ch^2 s(\lambda_h,\lambda_h)^{\frac 12}\|\varphi\|_{2}.
 \end{eqnarray*}
 As to $I_3$, we have from the Cauchy-Schwarz inequality and the trace inequality \eqref{tracein},
 \begin{eqnarray*}
 |I_3|&=& \left|  \sum_{T\in\T_h}\langle
   \varphi-{\cal Q}^{k-1}_h \varphi, \nabla \lambda_0\cdot{\bf n}-\lambda_n\rangle_\pT    \right|\\
 &\leq & \left(\sum_{T\in\T_h} h_T ^{-1} \| \nabla \lambda_0\cdot{\bf n}-\lambda_n\|^2_{\pT}\right)^{\frac12} \left( \sum_{T\in\T_h} h_T  \|\varphi-{\cal Q}^{k-1}_h \varphi \|^2_{\pT}\right)^{\frac12}\\
 &\leq & Cs(\lambda_h,\lambda_h)^{\frac 12}(\sum_{T\in\T_h} \|\varphi-{\cal Q}^{k-1}_h \varphi\|^2_T+ h_T^{2}\|\nabla(\varphi-{\cal Q}^{k-1}_h \varphi)\|^2_T)^{\frac12}\\
  &\leq & Ch^2  s(\lambda_h,\lambda_h)^{\frac 12}\|\varphi\|_{2}.
 \end{eqnarray*}

Combining all the estimates for $I_i$ and the estimate \eqref{erres} in Theorem \ref{theoestimate},  we arrive at
 \begin{equation}\label{eq:12}
  |(\lambda_0, \theta)| \leq Ch^2  s(\lambda_h,\lambda_h)^{\frac 12}\|\varphi\|_{2}\leq Ch^{k+2}\|u\|_{k}\|\varphi\|_{2}.
  \end{equation}
  The estimate \eqref{es:2} then follows from the $H^{2}(\Omega)$-regularity  \eqref{regularity:1}. This completes the proof.
  \end{proof}

  \section{Primal-Dual Weak Galerkin based on Domain Decompositions}
Let $\Omega$ be a bounded domain with a Lipschitz boundary $\partial\Omega$. Let $\{\Omega_j: j=1,\cdots,M\}$ be a domain decomposition of the whole $\Omega$ such that 
\begin{equation*}
\overline{\Omega}=\bigcup_{j=1}^M \overline{\Omega}_j, \qquad \Omega_j\cap\Omega_k=\emptyset, j\neq k.
\end{equation*}
Assume that $\Omega_j$ is star-shaped and $\partial\Omega_j$ ($j=1, \cdots, M$) is Lipschitz continuous. In practice, with the exception of a few $\Omega_j$ along $\partial\Omega$, each $\Omega_j$ is convex with a piecewise-smooth boundary. We introduce
$$
\Gamma=\partial \Omega, \qquad \Gamma_j=\Gamma\cap\partial\Omega_j,
\qquad \Gamma_{jk}=\Gamma_{kj}=\partial\Omega_j\cap\partial\Omega_k.
$$

For $j=1, \cdots, M$, patching $W_k(T)$ over all the element $T\in {\cal T}_h^i$ through a common value $v_b$ and $v_n$ on the interior edges or flat faces ${\mathcal
E}_h^0 \cap \Omega_j$ gives rise to $ W_h(\Omega_j)$.
% $$
%  W_h(\Omega_j)=\Pi_{T\in {\cal T}_h^j} W_k(T), \qquad   j=1, \cdots, M.$$
We introduce two  subspaces of $W_h(\Omega_j)$ for $j=1, \cdots, M$; i.e.,
$$
 W_h^0(\Omega_j)=\{v\in  W_h(\Omega_j): v_b|_{\Gamma_j}=0 \}, \qquad j=1, \cdots, M,$$
 $$
 W_h^g(\Omega_j)=\{v\in  W_h(\Omega_j): v_b|_{\Gamma_j}=Q_b g \}, \qquad  j=1, \cdots, M.$$ 
We further introduce 
$${\cal W}_h=\Pi_{j=1}^M W_h(\Omega_j).$$
The two subspaces of ${\cal W}_h$   are defined as follows: 
$${\cal W}_h^g=\{v\in {\cal W}_h: v_b|_{e}=Q_bg,  e\subset\partial\Omega\},$$
$${\cal W}_h^0=\{v\in {\cal W}_h: v_b|_{e}=0,  e\subset\partial\Omega\}.$$

 Define the jump of $v\in {\cal W}_h$ on $\Gamma_{jk}$ for $j, k=1, \cdots, M$ by
  \begin{equation}\label{jump}
    \ljump v_b\rjump_{\Gamma_{jk}}=
    v_{b, jk} -v_{b, kj},
  \end{equation}
  where $ v_{b, jk}$ and $v_{b, kj}$ represent the values of $v_b$ on $\Gamma_{jk}$ as seen from $\Omega_{j}$ and $\Omega_{k}$ respectively.
  The order of $\Omega_j$ and $\Omega_k$ is non-essential in \eqref{jump} as long as the difference is taken in a consistent way in all the formulas. 

We further introduce the following spaces:
$${\cal V}_h=\{v\in {\cal W}_h: \ljump v_b\rjump_{\Gamma_{jk}}=0, \ljump v_n\rjump_{\Gamma_{jk}}=0, \ \forall j, k=1, \cdots, M\}.$$
The two subspaces of ${\cal V}_h$ are defined by
$$
{\cal V}_h^0=\{v\in {\cal V}_h: v_b|_{e}=0,  e\subset\partial\Omega\},
$$
$$
{\cal V}_h^g=\{v\in {\cal V}_h: v_b|_{e}=Q_bg,  e\subset\partial\Omega\}.
$$

We will introduce an equivalent form of the primal-dual weak Galerkin finite element method \eqref{32}-\eqref{2} which is  restricted in the subdomain $\Omega_j$ $(j=1, \cdots, M)$: Find $(u_{h, j}, \lambda_{h, j}, \mu_{h, j})\in M_h(\Omega_j)\times{\cal W}^0_h(\Omega_j)\times{\cal W}^g_h(\Omega_j)$ such that
\begin{equation}\label{pdwg}
\begin{split}
&s_{\Omega_j}(\lambda_{h, j}, w_j)+ \sum_{k=1}^M \langle \tau_{je}\mu_{n, jk}, w_{b, jk}\rangle_{\Gamma_{jk}}- \sum_{k=1}^M \langle \mu_{b, jk}, \tau_{je}w_{n, jk}\rangle_{\Gamma_{jk}} \\
&+(u_{h, j}, \Delta_w w_j)_{{\Omega_j}}=(f, w_{0, j})_{{\Omega_j}}+  \langle g, w_n \rangle_{\partial\Omega_j \cap \partial \Omega},\qquad\forall w_j\in {\cal W}^0_h(\Omega_j),\\
&(v_j, \Delta_w\lambda_{h, j})_{{\Omega_j}}=0,\qquad \forall v_j\in M_h(\Omega_j),\\
& \sum_{k=1}^M \langle \tau_{je}\nu_{n, jk}, \lambda_{b, jk}\rangle_{\Gamma_{jk}}- \sum_{k=1}^M \langle \nu_{b, jk}, \tau_{je}\lambda_{n, jk}\rangle_{\Gamma_{jk}} = 0,\quad \forall \nu_h, 
\end{split}
\end{equation} 
where $s_{\Omega_j}(\cdot, \cdot)=\sum_{T\in {\cal T}_h^i}s_T(\cdot, \cdot)$, $(\cdot, \cdot)_{\Omega_j}=\sum_{T\in {\cal T}_h^i} (\cdot, \cdot)_T$, and $\tau_{je}=\bn_j\cdot\bn_e$ ($\bn_j$ is an unit outward normal direction to $\Omega_j$. Note that $\bn_e$ is selected as an unit outward normal direction  on boundary edge or otherwise the term $\langle g, w_n \rangle_{\partial\Omega_j \cap \partial \Omega}$ in the numerical schemes has to be modified). 
 % Note that $\lambda_{b, jk}$ is the value of $\lambda_b$ as seen from $\Omega_j$ and $\lambda_{b, kj}$ is the value of $\lambda_b$  as seen from $\Omega_k$. The same rule applies to $\lambda_{n, jk}$ and $\lambda_{n, kj}$. 
In addition, the following interface conditions hold true:
 
 \begin{eqnarray}\label{c1}
 \lambda_{b, jk}&=&\lambda_{b, kj}, \qquad\lambda_{n, jk}=\lambda_{n, kj};\\
\label{c2}
\mu_{b, jk}&=&\mu_{b, kj}, \qquad\mu_{n, jk}=\mu_{n, kj}.
\end{eqnarray}  

% \begin{equation}\label{c3}
%   \mu_{b, jk}=Q_b g \  \text{on}\   \Gamma_j;  
% \end{equation} 

% \begin{equation}\label{c4}
%      w_{b, jk}=0\  \text{on}\  \Gamma_j.  
% \end{equation}  

\begin{lemma}
Let $\sigma$ and $\beta$ be two positive functions on $\bigcup_{j, k=1}^M \Gamma_{jk}$. The conditions \eqref{c1} and \eqref{c2} are equivalent to the following:

\begin{eqnarray}\label{a}
   \mu_{b, jk}+\sigma \lambda_{n, jk}\tau_{je}&=&\mu_{b, kj}-\sigma \lambda_{n, kj}\tau_{ke},\\ 
\label{b}    \beta \lambda_{b, jk}-\mu_{n, jk}\tau_{je} &=& \beta \lambda_{b, kj}+\mu_{n, kj}\tau_{ke},\\
\label{c}
   \mu_{b, kj}+\sigma \lambda_{n, kj}\tau_{ke}&=&\mu_{b, jk}-\sigma \lambda_{n, jk}\tau_{je},\\
\label{d}\ \beta \lambda_{b, kj}-\mu_{n, kj}\tau_{ke} &=& \beta \lambda_{b, jk}+\mu_{n, jk}\tau_{je}.
\end{eqnarray} 
 \end{lemma}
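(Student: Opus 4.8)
The plan is to prove the equivalence by establishing implications in both directions, treating the four displayed equations \eqref{a}--\eqref{d} as a linear system in the four interface unknowns $\mu_{b,jk}, \mu_{b,kj}, \lambda_{n,jk}, \lambda_{n,kj}$ (for the first pair) and $\lambda_{b,jk}, \lambda_{b,kj}, \mu_{n,jk}, \mu_{n,kj}$ (for the second pair). First I would observe that the system decouples: equations \eqref{a} and \eqref{c} involve only $\mu_b$ and $\lambda_n$, while \eqref{b} and \eqref{d} involve only $\lambda_b$ and $\mu_n$. So the proof reduces to two essentially identical $2\times 2$ arguments.

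For the forward direction, I would assume \eqref{c1}--\eqref{c2} and simply substitute. Writing $\mu_{b,jk}=\mu_{b,kj}=:\mu_b$ and $\lambda_{n,jk}=\lambda_{n,kj}=:\lambda_n$, equation \eqref{a} becomes $\mu_b + \sigma\lambda_n\tau_{je} = \mu_b - \sigma\lambda_n\tau_{ke}$, i.e. $\sigma\lambda_n(\tau_{je}+\tau_{ke})=0$. Here the key geometric fact I would invoke is that $\bn_j = -\bn_k$ on $\Gamma_{jk}$ (the outward normals of the two adjacent subdomains are opposite), hence $\tau_{je} = \bn_j\cdot\bn_e = -\bn_k\cdot\bn_e = -\tau_{ke}$, so $\tau_{je}+\tau_{ke}=0$ and \eqref{a} holds identically; \eqref{c} is then just \eqref{a} with the roles of $j,k$ swapped and holds for the same reason. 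An identical computation using $\sigma\leftrightarrow\beta$ and $\mu_b\leftrightarrow\lambda_b$, $\lambda_n\leftrightarrow\mu_n$ gives \eqref{b} and \eqref{d}.

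For the converse, assume \eqref{a}--\eqref{d} hold, and again use $\tau_{ke}=-\tau_{je}$ to rewrite. Equation \eqref{a} reads $\mu_{b,jk} + \sigma\lambda_{n,jk}\tau_{je} = \mu_{b,kj} + \sigma\lambda_{n,kj}\tau_{je}$, i.e. $(\mu_{b,jk}-\mu_{b,kj}) = -\sigma\tau_{je}(\lambda_{n,jk}-\lambda_{n,kj})$. Equation \eqref{c} reads $\mu_{b,kj} - \sigma\lambda_{n,kj}\tau_{je} = \mu_{b,jk} - \sigma\lambda_{n,jk}\tau_{je}$, i.e. $(\mu_{b,jk}-\mu_{b,kj}) = \sigma\tau_{je}(\lambda_{n,jk}-\lambda_{n,kj})$. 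Adding these two identities gives $2(\mu_{b,jk}-\mu_{b,kj})=0$, hence $\mu_{b,jk}=\mu_{b,kj}$, and then $\sigma\tau_{je}(\lambda_{n,jk}-\lambda_{n,kj})=0$; since $\sigma>0$ by hypothesis and $\tau_{je}\ne 0$ (on each $\Gamma_{jk}$ the normals are not orthogonal — in fact $\tau_{je}=\pm 1$ when $\bn_e$ is aligned with $\bn_j$), we conclude $\lambda_{n,jk}=\lambda_{n,kj}$. This recovers \eqref{c1}. Running the same argument on \eqref{b} and \eqref{d} with $\beta>0$ yields $\lambda_{b,jk}=\lambda_{b,kj}$ and $\mu_{n,jk}=\mu_{n,kj}$, which is \eqref{c2}.

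The main obstacle is purely a matter of pinning down the sign/normalization conventions: one must be careful that $\tau_{je}$ and $\tau_{ke}$ are computed with respect to the \emph{same} fixed reference normal $\bn_e$ on $\Gamma_{jk}$, so that the relation $\tau_{ke}=-\tau_{je}$ genuinely holds, and one must know that $\tau_{je}\ne 0$ so that the factor can be cancelled; both follow from the setup in which $\bn_e$ is chosen aligned with one of the subdomain normals. Once these conventions are fixed, the proof is the short linear-algebra manipulation sketched above, and I would present it compactly by doing the $(\mu_b,\lambda_n)$ case in full and remarking that the $(\lambda_b,\mu_n)$ case is identical with $\sigma$ replaced by $\beta$.
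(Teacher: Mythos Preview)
Your proposal is correct and follows essentially the same linear-algebra approach as the paper: pair \eqref{a} with \eqref{c} and \eqref{b} with \eqref{d}, then add/subtract to isolate the differences, using $\tau_{je}=-\tau_{ke}$ and the positivity of $\sigma,\beta$. In fact your write-up is more complete than the paper's, which only spells out the converse direction (\eqref{a}--\eqref{d} $\Rightarrow$ \eqref{c1}--\eqref{c2}) in two lines and leaves the forward implication implicit.
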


\begin{proof}
From \eqref{a} and \eqref{c}), we have
$$
\mu_{b, jk}=\mu_{b, kj}, \lambda_{n, jk}=\lambda_{n, kj}.
$$
From \eqref{b} and \eqref{d}, we have $\lambda_{b, jk}=\lambda_{b, kj}$  and  $\mu_{n, jk}=\mu_{n, kj}$. 

This completes the proof of the lemma.
\end{proof}
 
\section{Iterative Procedure for Primal-Dual Weak Galerkin Scheme}
The iterative procedure for the PDWG scheme \eqref{pdwg} defined on the subdomain $\Omega_j$ for $j=1, \cdots, M$ is as follows: Find $\lambda_{h, j}^{(m)}=\{\lambda_{0, j}^{(m)}, \lambda_{b, jk}^{(m)}, \lambda_{n, jk}^{(m)}\} \in {\cal W}^0_h(\Omega_j)$, $u_{h, j}^{(m)}\in  M_h(\Omega_j)$ such that
\begin{equation}\label{e1}
\left\{
\begin{split}
&s_{\Omega_j}(\lambda_{h, j}^{(m)}, w_j)+(u_{h, j}^{(m)}, \Delta_w w_j)_{{\Omega_j}}\\
& \ \ + \sum_{k=1}^M\langle
\beta\lambda_{b, jk}^{(m)}-r_{b,kj}^{(m-1)}, w_{b, jk}\rangle_{\Gamma_{jk}}+\sum_{k=1}^M\langle
\sigma\lambda_{n, jk}^{(m)}-r_{n,kj}^{(m-1)}, w_{n, jk}\rangle_{\Gamma_{jk}}\\
=& (f, w_{0,j})_{{\Omega_j}}+ \langle g, w_{n,j}\rangle_{\partial\Omega_j\cap\partial\Omega},\qquad\forall w_j\in {\cal W}^0_h(\Omega_j),\\
&(v_j, \Delta_w \lambda_{h, j}^{(m)})_{{\Omega_j}}=0,\qquad \forall  v_j \in M_h(\Omega_j),
\end{split}\right.
\end{equation}
where
\begin{eqnarray}
r_{b,kj}^{(m-1)} &=& 2\beta \lambda_{b,kj}^{(m-1)} - r_{b,jk}^{(m-2)},\label{EQ:March06:001}\\
 r_{n,kj}^{(m-1)} &=& 2\sigma \lambda_{n,kj}^{(m-1)} - r_{n,jk}^{(m-2)}.\label{EQ:March06:002}
\end{eqnarray}

\section{The derivation of the iterative scheme}

  First note the following:
\begin{equation}\label{c5-new}
     \mu_{b, jk}^{(m)}\tau_{je}+\sigma \lambda^{(m)}_{n, jk}=\tau_{je}\mu_{b, kj}^{(m-1)} + \sigma \lambda_{n, kj}^{(m-1)},
\end{equation}  
\begin{equation}\label{c6-new}
    \beta\lambda_{b, jk}^{(m)}-\mu_{n, jk}^{(m)}\tau_{je}=\beta \lambda_{b, kj}^{(m-1)}+\mu_{n, kj}^{(m-1)}\tau_{ke},
\end{equation}  
\begin{equation}\label{c5-newnew}
     \mu_{b, kj}^{(m-1)}\tau_{je}-\sigma \lambda^{(m-1)}_{n, kj}=\tau_{je}\mu_{b, jk}^{(m-2)} - \sigma \lambda_{n, jk}^{(m-2)}.
\end{equation} 
 
Substituting \eqref{c5-new}-\eqref{c5-newnew} into \eqref{pdwg} yields
\begin{equation*} 
\begin{split}
&\quad \ s_{\Omega_j}(\lambda_{h, j}^{(m)}, w_j) +(u_{h, j}^{(m)}, \Delta_w w_j)_{{\Omega_j}} 
\\ & \quad \ 
   +\sum_{k=1}^M\Big( \langle \beta \lambda_{b,jk}^{(m)}, w_{b, jk}\rangle_{\Gamma_{jk}}
   + \langle \sigma \lambda^{(m)}_{n, jk}, w_{n, jk}\rangle_{\Gamma_{jk}} \Big)
\\ &=
   \sum_{k=1}^M\Big(  
   \langle    \beta \lambda_{b, kj}^{(m-1)}+\mu_{n, kj}^{(m-1)}{ \tau_{ke}}, w_{b, jk}    \rangle_{\Gamma_{jk}} 
   + {  \langle  \sigma \lambda_{n, kj}^{(m-1)} + \mu_{b, kj}^{(m-1)} \tau_{je} , w_{n, jk}   \rangle_{\Gamma_{jk}} }
     \Big)
\\ & \quad \ 
   +  (f, w_{0,j})_{{\Omega_j}} + \langle g, w_{n,j}\rangle_{\partial\Omega_j\cap\partial\Omega}
\\ &=
   \sum_{k=1}^M\Big(  
   \langle    r_{b, kj}^{(m-1)}, w_{b, jk}    \rangle_{\Gamma_{jk}} 
   + \langle   r_{n, kj}^{(m-1)}, w_{n, jk}   \rangle_{\Gamma_{jk}}
     \Big) 
   +  (f, w_{0,j})_{{\Omega_j}} + \langle g, w_{n,j}\rangle_{\partial\Omega_j\cap\partial\Omega}
\\ &=
   \sum_{k=1}^M\Big(  
   \langle   2 \beta \lambda_{b, kj}^{(m-1)}
               -(\beta \lambda_{b, jk}^{(m-2)} + \mu_{n, jk}^{(m-2)}{ \tau_{je}}), w_{b, jk}    \rangle_{\Gamma_{jk}} 
   \\&\qquad 
  + \langle   {  2 \sigma \lambda_{n, kj}^{(m-1)} + (- \sigma \lambda_{n, jk}^{(m-2)} + \tau_{je} }\mu_{b, jk}^{(m-2)} ) , 
             w_{n, jk}   \rangle_{\Gamma_{jk}}
     \Big)
\\ & \quad \ 
   +  (f, w_{0,j})_{{\Omega_j}} + \langle g, w_{n,j}\rangle_{\partial\Omega_j\cap\partial\Omega} \\
&=
   \sum_{k=1}^M\Big(  
   \langle   2 \beta \lambda_{b, kj}^{(m-1)} -r_{b, jk}^{(m-2)}, w_{b, jk}    \rangle_{\Gamma_{jk}}  
  + \langle   {  2 \sigma \lambda_{n, kj}^{(m-1)} - r_{n, jk}^{(m-2)}}, 
             w_{n, jk}   \rangle_{\Gamma_{jk}}
     \Big)
\\ & \quad \ 
   +  (f, w_{0,j})_{{\Omega_j}}+ \langle g, w_{n,j}\rangle_{\partial\Omega_j\cap\partial\Omega}.
\end{split}
\end{equation*}

The iteration schemes is then given as follows:
\begin{equation} \label{iterativescheme}
\begin{split}
&\quad \ s_{\Omega_j}(\lambda_{h, j}^{(m)}, w_j) +(u_{h, j}^{(m)}, \Delta_w w_j)_{{\Omega_j}} 
\\ & \quad \ 
   +\sum_{k=1}^M\Big( \langle \beta \lambda_{b,jk}^{(m)} - r_{b,kj}^{(m-1)}, w_{b, jk}\rangle_{\Gamma_{jk}}
   + \langle \sigma \lambda^{(m)}_{n, jk}- r_{n,kj}^{(m-1)}, w_{n, jk}\rangle_{\Gamma_{jk}} \Big)
\\ &=  (f, w_{0,j})_{{\Omega_j}}+ \langle g, w_{n,j}\rangle_{\partial\Omega_j\cap\partial\Omega},
\end{split}
\end{equation}
where 
{ 
\begin{eqnarray} 
 r_{b,kj}^{(m-1)}&=& 2\beta  \lambda_{b, kj}^{(m-1)} -r_{b, jk}^{(m-2)},  r_{b,kj}^{(0)}=0, \label{cond:01}\\
                    r_{n, kj}^{(m-1)} &=& 2 \sigma \lambda_{n, kj}^{(m-1)} - r_{n, jk}^{(m-2)},   r_{n,kj}^{(0)}=0.\label{cond:02}  
\end{eqnarray} 
}

Observe tht the connection with the Lagrange multiplier $\mu_{b,jk}$ and $\mu_{n,jk}$ is given as follows:
\begin{eqnarray}
\mu_{n,jk}^{(m)} \tau_{je} &=& \beta\lambda_{b,jk}^{(m)} - r_{b,kj}^{(m-1)},\\
-\mu_{b,jk}^{(m)} \tau_{je} &=& \sigma\lambda_{n,jk}^{(m)} - r_{n,kj}^{(m-1)}.
\end{eqnarray}

{ 
The iteration schemes is given as follows:
\begin{equation} 
\begin{split}
&\quad \ s_{\Omega_j}(\lambda_{h, j}^{(m)}, w_j) +(u_{h, j}^{(m)}, \Delta_w w_j)_{{\Omega_j}} 
%\\ & \quad \ 
   +\sum_{k=1}^M\Big( \langle \beta \lambda_{b,jk}^{(m)}, w_{b, jk}\rangle_{\Gamma_{jk}}
   + \langle \sigma \lambda^{(m)}_{n, jk}, w_{n, jk}\rangle_{\Gamma_{jk}} \Big)
\\ &=  (f, w_{0,j})_{{\Omega_j}}+ \langle g, w_{n,j}\rangle_{\partial\Omega_j\cap\partial\Omega} 
%\\ & \quad \ 
   +\sum_{k=1}^M\Big( \langle {  r_{b,kj}^{(m-1)}}, w_{b, jk}\rangle_{\Gamma_{jk}}
   + \langle { r_{n,kj}^{(m-1)}}, w_{n, jk}\rangle_{\Gamma_{jk}} \Big) \\
&\qquad {  (v_j, \Delta_w \lambda_{h, j}^{(m)})_{{\Omega_j}}=0,\qquad \forall  v_j \in M_h(\Omega_j)},
\end{split}
\end{equation}
where  
\begin{eqnarray} 
 r_{b,kj}^{(m-1)}&=& 2{ \beta }\lambda_{b, kj}^{(m-1)} - r_{b, jk}^{(m-2)}, \qquad
      r_{b,kj}^{(0)}=0,\\
 r_{n, kj}^{(m-1)} &=&
    2 {  \sigma}\lambda_{n, kj}^{(m-1)} - r_{n, jk}^{(m-2)}, \qquad
    r_{n,kj}^{(0)}=0.   
\end{eqnarray} 
}

\section{Convergence Analysis}

From \eqref{EQ:March06:001} we have
\begin{eqnarray*}
\sum_{k,j} \|r_{b,kj}^{(m-1)}\|_{\Gamma_{kj}}^2 &=& \sum_{k,j} \|\beta\lambda_{b,kj}^{(m-1)} \|_{\Gamma_{kj}}^2   + \|r_{b,jk}^{(m-2)}- \beta\lambda_{b,kj}^{(m-1)}\|_{\Gamma_{kj}}^2\\
&& + 2\beta \langle \beta\lambda_{b,kj}^{(m-1)} - r_{b,jk}^{(m-2)}, \lambda_{b,kj}^{(m-1)}\rangle_{\Gamma_{kj}}\\
&=& \sum_{k,j} \|\beta\lambda_{b,kj}^{(m-1)} \|_{\Gamma_{kj}}^2   + \|r_{b,jk}^{(m-2)}- \beta\lambda_{b,kj}^{(m-1)}\|_{\Gamma_{kj}}^2\\
&& - 2\beta \langle \beta\lambda_{b,kj}^{(m-1)} - r_{b,jk}^{(m-2)}, \lambda_{b,kj}^{(m-1)}\rangle_{\Gamma_{kj}}\\
&& +4\beta \langle \beta\lambda_{b,kj}^{(m-1)} - r_{b,jk}^{(m-2)}, \lambda_{b,kj}^{(m-1)}\rangle_{\Gamma_{kj}}\\
&=& \sum_{k,j} \|r_{b,jk}^{(m-2)}\|_{\Gamma_{kj}}^2 + 4\beta \langle \beta\lambda_{b,kj}^{(m-1)} - r_{b,jk}^{(m-2)}, \lambda_{b,kj}^{(m-1)}\rangle_{\Gamma_{kj}}.
\end{eqnarray*}
Analogously, from \eqref{EQ:March06:002} we have
\begin{eqnarray*}
\sum_{k,j} \|r_{n,kj}^{(m-1)}\|_{\Gamma_{kj}}^2 &=& \sum_{k,j} \|\sigma\lambda_{n,kj}^{(m-1)} \|_{\Gamma_{kj}}^2   + \|r_{n,jk}^{(m-2)}- \sigma\lambda_{n,kj}^{(m-1)}\|_{\Gamma_{kj}}^2\\
&& + 2\sigma \langle \sigma\lambda_{n,kj}^{(m-1)} - r_{n,jk}^{(m-2)}, \lambda_{n,kj}^{(m-1)}\rangle_{\Gamma_{kj}}\\
&=& \sum_{k,j} \|\sigma\lambda_{n,kj}^{(m-1)} \|_{\Gamma_{kj}}^2   - \|r_{n,jk}^{(m-2)}- \sigma\lambda_{n,kj}^{(m-1)}\|_{\Gamma_{kj}}^2\\
&& - 2\sigma \langle \sigma\lambda_{n,kj}^{(m-1)} - r_{n,jk}^{(m-2)}, \lambda_{n,kj}^{(m-1)}\rangle_{\Gamma_{kj}}\\
&& +4\sigma \langle \sigma\lambda_{n,kj}^{(m-1)} - r_{n,jk}^{(m-2)}, \lambda_{n,kj}^{(m-1)}\rangle_{\Gamma_{kj}}\\
&=& \sum_{k,j} \|r_{n,jk}^{(m-2)}\|_{\Gamma_{kj}}^2 + 4\sigma \langle \sigma\lambda_{n,kj}^{(m-1)} - r_{n,jk}^{(m-2)}, \lambda_{n,kj}^{(m-1)}\rangle_{\Gamma_{kj}}.
\end{eqnarray*}
By letting $w_j=\lambda_{h,j}^{(m-1)}$ in \eqref{e1} at the iterative step $m-1$ we obtain
\begin{equation}\label{EQ:100}
\begin{split}
&\sum_{j,k=1}^M \langle \beta \lambda_{b,jk}^{(m-1)} - r_{b,kj}^{(m-2)}, \lambda_{b, jk}^{(m-1)}\rangle_{\Gamma_{jk}}
   + \langle \sigma \lambda^{(m-1)}_{n, jk}- r_{n,kj}^{(m-2)}, \lambda_{n, jk}^{(m-1)}\rangle_{\Gamma_{jk}} \\
= & -  \sum_j s_{\Omega_j}(\lambda_{h, j}^{(m-1)}, \lambda_{h, j}^{(m-1)}).
\end{split}
\end{equation}
It follows that
\begin{equation}\label{EQ:energy}
\begin{split}
&\beta^{-1}\sum_{k,j} \|r_{b,kj}^{(m-1)}\|_{\Gamma_{kj}}^2 + \sigma^{-1}\sum_{k,j} \|r_{n,kj}^{(m-1)}\|_{\Gamma_{kj}}^2 \\=&
\beta^{-1}\sum_{k,j} \|r_{b,kj}^{(m-2)}\|_{\Gamma_{kj}}^2 + \sigma^{-1}\sum_{k,j} \|r_{n,kj}^{(m-2)}\|_{\Gamma_{kj}}^2\\
& - 4 \sum_j s_{\Omega_j}(\lambda_{h, j}^{(m-1)}, \lambda_{h, j}^{(m-1)}).
\end{split}
\end{equation}

The energy identity \eqref{EQ:energy} has the following implications:

\begin{itemize}
\item The sequences $r_{b,jk}^{(m)} $ and $r_{n,jk}^{(m)}$ are bounded, and thus has convergent subsequences. May assume that this sequence itself is convergent.
\item $\sum_j s_{\Omega_j}(\lambda_{h, j}^{(m-1)}, \lambda_{h, j}^{(m-1)}) \to 0$ as $m\to \infty$.
\item From \eqref{EQ:March06:001} and \eqref{EQ:March06:002} we see that $\lambda_{b,jk}^{(m)} $ and $\lambda_{n,jk}^{(m)}$ are convergent.
\item From the second equation in \eqref{e1} and the fact that $\sum_j s_{\Omega_j}(\lambda_{h, j}^{(m-1)}, \lambda_{h, j}^{(m-1)}) \to 0$ we can show that $\Delta\lambda_0^{(m)} \to 0$. This leads to the result of $\lambda_0^{(m)} \to 0$ so that $\lambda_{b,jk}^{(m)} $ and $\lambda_{n,jk}^{(m)}$ are all convergent to zero.
\item From the first equation of \eqref{e1} we may show that $u_{h,j}^{(m)} \to 0 $ by special selections of $w_j$ such that $\Delta_w w_j =  u_{h,j}^{(m)}$.
\end{itemize}


\begin{thebibliography}{99}


 \bibitem{pdwg3} {\sc W. Cao, C. Wang and J. Wang},  {\em An $L^p$-Primal-Dual Weak Galerkin Method for div-curl Systems}, Journal of Computational and Applied Mathematics, vol. 422, 114881, 2023.
 \bibitem{pdwg4}{\sc  W. Cao, C. Wang and J. Wang},  {\em An $L^p$-Primal-Dual Weak Galerkin Method for Convection-Diffusion Equations}, Journal of Computational and Applied Mathematics, vol. 419, 114698, 2023. 
 \bibitem{pdwg5}{\sc W. Cao, C. Wang and J. Wang},  {\em A New Primal-Dual Weak Galerkin Method for Elliptic Interface Problems with Low Regularity Assumptions}, Journal of Computational Physics, vol. 470, 111538, 2022.
   \bibitem{wg11}{\sc  S. Cao, C. Wang and J. Wang},  {\em A new numerical method for div-curl Systems with Low Regularity Assumptions}, Computers and Mathematics with Applications, vol. 144, pp. 47-59, 2022.
\bibitem{pdwg10}{\sc  W. Cao and C. Wang},  {\em New Primal-Dual Weak Galerkin Finite Element Methods for Convection-Diffusion Problems}, Applied Numerical Mathematics, vol. 162, pp. 171-191, 2021. 


\bibitem{9}{\sc B. Despres}, {\em Methodes de decomposition de domains pour les problems de propagation d'ondes en regime harmonique}, these, Univeristy Paris IX Dauphine, UER, Mathematiques de la Decision, 1991.

\bibitem{10}{\sc B. Despres}, {\em Domain decomposition and the Helmholz problem}, Proceedings of First International conference on Mathematical and Numerical Aspects of Wave Propagation.


 

  
\bibitem{11}{\sc B. Despres, P. Joly and J. E. Roberts}, {\em  Domain decomposition method for harmonic Maxwell's equations}, Proceedings of the IMACS international symposium on iterative methods in linear algebra, Elsevier, North Holland, 1990.

\bibitem{j}{\sc Jr. Douglas, P.J. Leme, J. E. Roberts and J. Wang},  {\em A parallel iterative procedure applicable to the approximate solution of second order partial differential equations by mixed finite element methods}, Numer. Math., 65, 95-108, 1993.

 

   

 

 \bibitem{wg14}{\sc D. Li, Y. Nie, and C. Wang},  {\em Superconvergence of Numerical Gradient for Weak Galerkin Finite Element Methods on Nonuniform Cartesian Partitions in Three Dimensions}, Computers and Mathematics with Applications, vol 78(3), pp. 905-928, 2019.  
  \bibitem{wg1} {\sc D. Li, C. Wang and J. Wang},  {\em An Extension of the Morley Element on General Polytopal Partitions Using Weak Galerkin Methods}, Journal of Scientific Computing, 100, vol 27, 2024.  
 \bibitem{wg2} {\sc D. Li, C. Wang and S. Zhang},  {\em Weak Galerkin methods for elliptic interface problems on curved polygonal partitions}, Journal of Computational and Applied Mathematics, pp. 115995, 2024. 
\bibitem{wg5} {\sc D. Li, C. Wang, J.  Wang and X. Ye},  {\em Generalized weak Galerkin finite element methods for second order elliptic problems}, Journal of Computational and Applied Mathematics, vol. 445, pp. 115833, 2024.
 \bibitem{wg6} {\sc D. Li, C. Wang, J. Wang and S. Zhang},  {\em High Order Morley Elements for Biharmonic Equations on Polytopal Partitions}, Journal of Computational and Applied Mathematics, Vol. 443, pp. 115757, 2024.
 \bibitem{wg7} {\sc D. Li, C. Wang and J. Wang},  {\em Curved Elements in Weak Galerkin Finite Element Methods}, Computers and Mathematics with Applications, Vol. 153, pp. 20-32, 2024.
\bibitem{wg8} {\sc D. Li, C. Wang and J. Wang},  {\em Generalized Weak Galerkin Finite Element Methods for Biharmonic Equations}, Journal of Computational and Applied Mathematics, vol. 434, 115353, 2023.
 \bibitem{pdwg1} {\sc D. Li, C. Wang and J. Wang},  {\em An $L^p$-primal-dual finite element method for first-order transport problems}, Journal of Computational and Applied Mathematics, vol. 434, 115345, 2023.
 \bibitem{pdwg2} {\sc D. Li and C. Wang},  {\em A simplified primal-dual weak Galerkin finite element method for Fokker-Planck type equations}, Journal of Numerical Methods for Partial Differential Equations, vol 39, pp. 3942-3963, 2023.
\bibitem{pdwg6}{\sc  D. Li, C. Wang and J. Wang},  {\em Primal-Dual Weak Galerkin Finite Element Methods for Transport Equations in Non-Divergence Form}, Journal of Computational and Applied Mathematics, vol. 412, 114313, 2022.
  \bibitem{wg13}{\sc  D. Li, C. Wang, and J. Wang},  {\em Superconvergence of the Gradient Approximation for Weak Galerkin Finite Element Methods on Rectangular Partitions}, Applied Numerical Mathematics, vol. 150, pp. 396-417, 2020.
\bibitem{lxz} {\sc B. Li, X. Xie and S. Zhang},  {\em  BPS preconditioners for a weak Galerkin finite
element method for 2D diffusion problems with strongly discontinuous
coefficients}, Computers \& Mathematics with Applications, 76(4), pp.701-724, 2018.  


\bibitem{qzw} {\sc F. Qin,  M. Zha and F. Wang},  {\em  A two-level additive Schwarz preconditioning
algorithm for the weak Galerkin method for the second-order elliptic
equation}, Mathematical Problems in Engineering, 2016.


\bibitem{tw2017} {\sc X. Tu and B. Wang},  {\em A BDDC Algorithm for Weak Galerkin Discretizations},
In Domain Decomposition Methods in Science and Engineering XXIII (pp. 269-276), Springer International Publishing, 2017.


  \bibitem{tw} {\sc X. Tu and B. Wang},  {\em  A BDDC algorithm for the Stokes problem with weak Galerkin
discretizations}, Computers \& Mathematics with Applications, 76(2), pp. 377-392,  2018.


 \bibitem{fedi}{\sc  C. Wang},  {\em A Preconditioner for the FETI-DP Method for Mortar-Type Crouzeix-Raviart Element Discretization}, Applications of Mathematics, Vol. 59, 6, pp. 653-672, 2014. 
   \bibitem{wg15}{\sc C. Wang},  {\em New Discretization Schemes for Time-Harmonic Maxwell Equations by Weak Galerkin Finite Element Methods}, Journal of Computational and Applied Mathematics, Vol. 341, pp. 127-143, 2018.  
 
   \bibitem{pdwg7}{\sc  C. Wang},  {\em Low Regularity Primal-Dual Weak Galerkin Finite Element Methods for Ill-Posed Elliptic Cauchy Problems}, Int. J. Numer. Anal. Mod., vol. 19(1), pp. 33-51, 2022.
 
 \bibitem{pdwg8}{\sc  C. Wang},  {\em A Modified Primal-Dual Weak Galerkin Finite Element Method for Second Order Elliptic Equations in Non-Divergence Form}, Int. J. Numer. Anal. Mod., vol. 18(4), pp. 500-523, 2021.

 
 \bibitem{pdwg13}{\sc  C. Wang},  {\em A New Primal-Dual Weak Galerkin Finite Element Method for Ill-posed Elliptic Cauchy Problems}, Journal of Computational and Applied Mathematics, vol 371, 112629, 2020.
 
  
  
   
 \bibitem{pdwg11}{\sc  C. Wang and J. Wang},  {\em A Primal-Dual Weak Galerkin Finite Element Method for Fokker-Planck Type Equations}, SIAM Numerical Analysis, vol. 58(5), pp. 2632-2661, 2020.
 \bibitem{pdwg12}{\sc  C. Wang and J. Wang},  {\em A Primal-Dual Finite Element Method for First-Order Transport Problems}, Journal of Computational Physics, Vol. 417, 109571, 2020.
  
 \bibitem{pdwg14}{\sc  C. Wang and J. Wang},  {\em Primal-Dual Weak Galerkin Finite Element Methods for Elliptic Cauchy Problems}, Computers and Mathematics with Applications, vol 79(3), pp. 746-763, 2020. 
 \bibitem{pdwg15}{\sc  C. Wang and J. Wang},  {\em A Primal-Dual Weak Galerkin Finite Element Method for Second Order Elliptic Equations in Non-Divergence form}, Mathematics of Computation, Vol. 87, pp. 515-545, 2018.  
   
 \bibitem{wg17}{\sc C. Wang and J. Wang},  {\em Discretization of Div-Curl Systems by Weak Galerkin Finite Element Methods on Polyhedral Partitions}, Journal of Scientific Computing, Vol. 68, pp. 1144-1171, 2016.    
   \bibitem{wg19}{\sc C. Wang and J. Wang},  {\em A Hybridized Formulation for Weak Galerkin Finite Element Methods for Biharmonic Equation on Polygonal or Polyhedral Meshes}, International Journal of Numerical Analysis and Modeling, Vol. 12, pp. 302-317, 2015. 
 \bibitem{wg20}{\sc  J. Wang and C. Wang},  {\em Weak Galerkin Finite Element Methods for Elliptic PDEs}, Science China, Vol. 45, pp. 1061-1092, 2015.  
 \bibitem{wg21}{\sc C. Wang and J. Wang},  {\em An Efficient Numerical Scheme for the Biharmonic Equation by Weak Galerkin Finite Element Methods on Polygonal or Polyhedral Meshes}, Journal of Computers and Mathematics with Applications, Vol. 68, 12, pp. 2314-2330, 2014.  
 
   \bibitem{wg18}{\sc C. Wang, J. Wang, R. Wang and R. Zhang},  {\em A Locking-Free Weak Galerkin Finite Element Method for Elasticity Problems in the Primal Formulation}, Journal of Computational and Applied Mathematics, Vol. 307, pp. 346-366, 2016.   
 

 \bibitem{wg12}{\sc  C. Wang, J. Wang, X. Ye and S. Zhang},  {\em De Rham Complexes for Weak Galerkin Finite Element Spaces}, Journal of Computational and Applied Mathematics, vol. 397, pp. 113645, 2021.
 
 \bibitem{wg3} {\sc C. Wang, J. Wang and S. Zhang},  {\em Weak Galerkin Finite Element Methods for Optimal Control Problems Governed by Second Order Elliptic Partial Differential Equations}, Journal of Computational and Applied Mathematics, in press, 2024. 
 
 \bibitem{itera} {\sc C. Wang, J. Wang and S. Zhang},  {\em A parallel iterative procedure for weak Galerkin methods for second order elliptic problems}, International Journal of Numerical Analysis and Modeling, vol. 21(1), pp. 1-19, 2023.
 \bibitem{wg9} {\sc C. Wang, J. Wang and S. Zhang},  {\em Weak Galerkin Finite Element Methods for Quad-Curl Problems}, Journal of Computational and Applied Mathematics, vol. 428, pp. 115186, 2023.
   \bibitem{wy3655} {\sc J. Wang, and X. Ye}, {\em A weak Galerkin mixed finite element method for second-order elliptic problems}, Math. Comp., vol. 83, pp. 2101-2126, 2014.


  \bibitem{wg4} {\sc C. Wang, X. Ye and S. Zhang},  {\em A Modified weak Galerkin finite element method for the Maxwell equations on polyhedral meshes}, Journal of Computational and Applied Mathematics, vol. 448, pp. 115918, 2024. 
 \bibitem{wg10}{\sc  C. Wang and S. Zhang},  {\em A Weak Galerkin Method for Elasticity Interface Problems}, Journal of Computational and Applied Mathematics, vol. 419, 114726, 2023. 
  \bibitem{pdwg9}{\sc  C. Wang and L. Zikatanov},  {\em Low Regularity Primal-Dual Weak Galerkin Finite Element Methods for Convection-Diffusion Equations}, Journal of Computational and Applied Mathematics, vol 394, 113543, 2021.
 
 \bibitem{wg16}{\sc  C. Wang and H. Zhou},  {\em A Weak Galerkin Finite Element Method for a Type of Fourth Order Problem arising from Fluorescence Tomography}, Journal of Scientific Computing, Vol. 71(3), pp. 897-918, 2017.  

  



\end{thebibliography}
\end{document}